\newtheorem{theorem}{Theorem}
\newtheorem{lemma}[theorem]{Lemma}
\newtheorem{definition}{Definition}
\newtheorem{remark}[theorem]{Remark}
\newtheorem{assumption}[theorem]{Assumption}
\newcommand{\BE}{\mathbb{E}}
\newcommand{\sr}{\mathbf r}
\newcommand{\x}{\mathbf x}
\newcommand{\y}{\mathbf y}
\newcommand{\z}{\mathbf z}
\newcommand{\su}{\mathbf u}
\newcommand{\sv}{\mathbf v}
\newcommand{\g}{\mathbf g}
\newcommand{\argmin}{\mathop{\rm argmin}}
\newcommand{\prox}{\textnormal{prox}}
\newcommand{\dom}{\textnormal{dom}}
\newcommand{\br}{\mathbb{R}}
\newcommand{\ba}{\begin{array}}
\newcommand{\ea}{\end{array}}
\newcommand{\ACal}{\mathcal{A}}
\newcommand{\BCal}{\mathcal{B}}
\title{\LARGE \bf Improved Sample Complexity for Stochastic Compositional Variance Reduced Gradient}
\author{Tianyi Lin, Chenyou Fan, Mengdi Wang and Michael I. Jordan\thanks{Tianyi Lin is with Department of IEOR, UC Berkeley. Chenyou Fan is with Google. Mengdi Wang is with Department of ORFE, Princeton University. Michael I. Jordan is with Department of EECS and Statistics, UC Berkeley. Email: {\tt\small \{darren{\_}lin@, jordan@cs\}berkeley.edu, fanchenyou@gmail.com, mengdiw@princeton.edu}}
}
\begin{document}

\maketitle
\thispagestyle{empty}
\pagestyle{empty}

\begin{abstract}
Convex composition optimization is an emerging topic that covers a wide range of applications arising from stochastic optimal control, reinforcement learning and multi-stage stochastic programming. Existing algorithms suffer from unsatisfactory sample complexity and practical issues since they ignore the convexity structure in the algorithmic design. In this paper, we develop a new stochastic compositional variance-reduced gradient algorithm with the sample complexity of $O((m+n)\log(1/\epsilon)+1/\epsilon^3)$ where $m+n$ is the total number of samples. Our algorithm is near-optimal as the dependence on $m+n$ is optimal up to a logarithmic factor. Experimental results on real-world datasets demonstrate the effectiveness and efficiency of the new algorithm. 
\end{abstract}

\section{Introduction}\label{sec:introduction}
We consider the convex composition optimization model: 
\begin{equation}\label{prob:N}
\min_{\x \in X} \ \Phi(\x) = F(\x) + r(\x), 
\end{equation}
where $F(\x) = \frac{1}{n}\sum_{i=1}^n f_i(\frac{1}{m}\sum_{j=1}^m g_j(\x))$ is convex with $f_i:\br^k \rightarrow \br$ and $g_j: X \rightarrow \br^k$ being continuously differentiable but possibly nonconvex, $r$ is an extended real-valued closed convex function on $\br^d$ and $X \subseteq \br^d$ is a convex set. Problem~\eqref{prob:N} is highly structured but also quite general; indeed, various special cases have been studied in a host of applications, including stochastic optimal control~\cite{Bertsekas-1995-Neuro}, multi-stage stochastic programming~\cite{Shapiro-2009-Lectures} and portfolio mean-variance optimization~\cite{Ravikumar-2009-Sparse}. A typical example is on-policy reinforcement learning~\cite{Sutton-1998-Reinforcement}: given a controllable Markov chain with states $\{1, \ldots, S\}$, a policy space $X$, a discount factor $\gamma \in (0, 1)$, a collection of transition probability matrices $\{P_j^\pi\}_{j=1}^m$ and $\{\sr_j^\pi\}_{j=1}^m$ from a simulator and a loss function $L$, the problem of solving the Bellman equation in a black-box simulation environment resorts to solving an optimization model:
\begin{equation*}
\min_{\x\in X} \ L\left( \frac{1}{m}\left[\sum_{j=1}^m \left(I-\gamma P_j^\pi\right) \x - \sr_j^\pi\right]\right),
\end{equation*} 
This is equivalent to problem~\eqref{prob:N} with $n=1$, $f_1 = \ell$ and $g_j(\x) = (I-\gamma P_j^\pi)\x - \sr_j^\pi$ for all $j$. 

Despite the popularity, problem~\eqref{prob:N} is more computationally challenging than its noncompositional counterpart; i.e., problem~\eqref{prob:N} with $g_j(\x) = \x$. Both stochastic gradient descent (SGD) and stochastic variance reduced gradient (SVRG)~\cite{Allen-2016-Improved} deteriorate because they suffer from very high computational burden for computing $\frac{1}{m}\sum_{j=1}^m g_j(\x)$ at each iteration. In response to this, a number of efficient variants of compositional SGD (SCGD)~\cite{Wang-2017-Stochastic, Wang-2017-Accelerating} and compositional SVRG (SCVRG)~\cite{Lian-2017-Finite, Yu-2017-Fast, Huo-2018-Accelerated} have been developed for solving problem~\eqref{prob:N}. The current state-of-the-art sample complexity in terms of optimality gap for convex composition optimization is $O((m+n)^{2/3}/\epsilon^2)$ achieved by the VRSC-PG algorithm~\cite{Huo-2018-Accelerated}; see Table~\ref{Tab:SCGD}. 

There has been abundant recent work on compositional SVRG for solving compositional optimization problems with strongly convex objectives~\cite{Lian-2017-Finite, Yu-2017-Fast} and nonconvex objectives~\cite{Huo-2018-Accelerated}. However, there has been relatively little investigation of convex (but not strongly convex) objectives which indeed cover various application problems in high-dimensional statistical learning~\cite{Wainwright-2019-High}. In particular, the following important case has been neglected: the composition function $F$ is convex while some of the composition terms are nonconvex. Perhaps counterintuitively, it has been observed in that the average of loss functions can be convex even if some loss functions are nonconvex~\cite{Shalev-2016-Sdca}. This occurs when some of the loss functions are strongly convex. In the SVRG setting, Allen-Zhu~\cite{Allen-2016-Improved} has shown that the optimal dependence of the sample complexity on $n$ can be established when the objective is convex. What remains unknown is whether this conclusion holds true in the compositional setting. Thus, it is natural to ask: 
\begin{quote}
\textsf{\emph{Can an algorithm achieve optimal dependence on $m+n$ for convex composition optimization?}}
\end{quote}
In this paper, we provide an affirmative answer to this question by developing an algorithm with a sample complexity of $O((m+n)\log(1/\epsilon)+1/\epsilon^3)$ in terms of the optimality gap. The algorithm is intuitive and has an efficient implementation. The proof technique is new and of independent interest.

The rest of the paper is organized as follows. In Section~\ref{Sec:prelim}, we discuss the notation, definitions and assumptions. In Section~\ref{Sec:algorithm}, we describe the algorithm and discuss its differences from the algorithms in Table~\ref{Tab:SCGD}. In Section~\ref{Sec:sample}, we provide the sample complexity analysis.  We place some technical detail in the Appendix. In Section~\ref{Sec:results}, we present numerical results on real-world datasets.
\begin{table}[!t]
\caption{Sample Complexity of Candidate Algorithms.}\label{Tab:SCGD}\vspace{-.5em}\scriptsize
\centering
\begin{tabular}{|l|l|c|c|} \hline 
 & Sample Complexity & $r \neq 0$? & Shrinking $\eta$? \\ \hline  
AGD~\cite{Nesterov-2013-Introductory} & $O((m+n)/\sqrt{\epsilon})$ & Yes & No \\ \hline
SCGD~\cite{Wang-2017-Stochastic} & $O(1/\epsilon^4)$ & No & Yes \\ \hline
ASC-PG \cite{Wang-2017-Accelerating} & $O(1/\epsilon^{3.5})$ & Yes & Yes \\ \hline
VRSC-PG \cite{Huo-2018-Accelerated} & $O((m+n)^{2/3}/\epsilon^2)$ & Yes & No \\ \hline
\textbf{This paper} & $O((m+n)\log(1/\epsilon)+1/\epsilon^3)$ & Yes & No \\ \hline
\end{tabular}
\end{table}

\section{Preliminaries}\label{Sec:prelim}
\textbf{Notation.} Vectors are denoted by bold lower case letters. $\left\|\cdot\right\|$ denotes the $\ell_2$-norm and the matrix spectral norm. For two sequences $a_t, b_t \geq 0$, we write $a_t=O(b_t)$ if $a_t \leq N b_t$ for some constant $N>0$. We denote the gradient\footnote{We refer $\nabla f(g(\x))$ to the gradient of $f(\y)$ at $\y=g(\x)$, not the gradient of $F(\x)=f(g(\x))$ at $\x$.} of $F$ at $\x$ as $\nabla F(\x)=[\partial g(\x)]^\top\nabla f(g(\x))$, where $\partial g$ is the Jacobian matrix of $g$. $\BE\left[\cdot\mid\zeta\right]$ refers to expectation conditioned on $\zeta$. 

\textbf{Goals in stochastic composition optimization:} We wish to find a point that globally minimizes the objective $\Phi$.
\begin{definition}
$\x^* \in X$ is an optimal solution to problem~\eqref{prob:N} if $\Phi(\x) - \Phi(\x^*) \geq 0$ for $\forall \x \in X$.
\end{definition}
In general, finding such global optimal solution is NP-hard~\cite{Murty-1987-Some} but standard for convex optimization. To this end, we make the following assumption throughout this paper.  
\begin{assumption}\label{Assumption:Objective-Convex-Main}
The objective $F$ and $r$ are both convex:
\begin{align*}
F(\x) - F(\y) - \left\langle \nabla F(\y), \x-\y\right\rangle & \geq 0, \quad \forall \x, \y \in X, \\ 
r(\x) - r(\y) - \left\langle\xi, \x-\y\right\rangle & \geq 0, \quad \forall \x, \y \in X, 
\end{align*}
where $\xi\in\partial r(\y)$ is a subgradient of $r$ at $\y$ and the constraint set $X$ is bounded. The proximal mapping of $r$, defined by
\begin{eqnarray*}
\prox_r^\eta(\x) = \argmin_{\y \in X}\left\{r(\y) + \frac{1}{2\eta}\left\|\y - \x\right\|^2\right\}, 
\end{eqnarray*}
can be efficiently computed for any $\x$ and any $\eta>0$. 
\end{assumption}
Assumption~\ref{Assumption:Objective-Convex-Main} is not restrictive since $r$ is an indicator function of a convex and bounded set in many applications~\cite{Parikh-2014-Proximal}. Furthermore, a minimal set of conditions that have become standard in the literature~\cite{Lian-2017-Finite,Yu-2017-Fast,Huo-2018-Accelerated} are as follows. 
\begin{assumption}\label{Assumption:Smooth-Gradient-Jacobian-Main} $f_i$ is $L_f$-Lipschitz and $\ell_f$-gradient Lipschitz on $X$, i.e., 
\begin{align*}
\left\| f_i(\x) - f_i(\y)\right\| & \leq L_f \left\|\x-\y\right\|, \quad \forall\x, \y \in X \\
\left\| \nabla f_i(\x) - \nabla f_i(\y)\right\| & \leq \ell_f \left\|\x-\y\right\|, \quad \forall\x, \y \in X.  
\end{align*}
Also, $g_j$ is $L_g$-Lipschitz and $\ell_g$-Jacobian Lipschitz on $X$: 
\begin{align*}
\left\| g_j(\x) - g_j(\y)\right\| & \leq L_g \left\|\x-\y\right\|, \quad \forall\x, \y \in X  \\
\left\| \partial g_j(\x) - \partial g_j(\y)\right\| & \leq \ell_g \left\|\x-\y\right\|, \quad \forall\x, \y \in X.   
\end{align*}
\end{assumption}
Given that $\dom(\Phi)$ is bounded, Assumption~\ref{Assumption:Smooth-Gradient-Jacobian-Main} is not restrictive but requires that $f_i$ and $g_j$ are continuously differentiable. Assumption~\ref{Assumption:Smooth-Gradient-Jacobian-Main} also implies that 
\[\|\nabla F(\x) - \nabla F(\y)\| \leq \ell\|\x - \y\|, \quad \ell = L_f\ell_g + L_g^2\ell_f. \]
We define an $\epsilon$-optimal solution in stochastic optimization via a relaxation of the global optimality condition.
\begin{definition}\label{Definition:eps-optimal}
$\x \in X$ is an $\epsilon$-optimal solution to problem~\eqref{prob:N} if $\Phi(\x) \leq \Phi(\x^*) + \epsilon$ where $\x^*$ is an optimal solution. 
\end{definition}
Throughout this paper, the algorithm efficiency is quantified by \textit{sample complexity}, i.e., the number of sampled $g_j(\x)$ or $\partial g_j(\x)$ for $\x \in \br^d$, or sampled $f_i(\y)$ or $\nabla f_i(\y)$ for $\y \in \br^k$, required to achieve an $\epsilon$-optimal solution in Definition~\ref{Definition:eps-optimal}.

With these definitions in mind, we ask if an algorithm can achieve a near-optimal sample complexity for convex composition optimization. 

\section{Algorithm}\label{Sec:algorithm}
In this section, we present our algorithm and discuss its differences from the algorithms in Table~\ref{Tab:SCGD}. We first choose a reference point $\tilde{\x}^s$ with $\tilde{\g}^{s+1}=g(\tilde{\x}^s)$, $\tilde{\z}^{s+1}=\partial g(\tilde{\x}^s)$, $\tilde{\sv}^{s+1}=[\tilde{\z}^{s+1}]^\top\nabla f(\tilde{\g}^{s+1})$ and 
{\small \begin{align}
\g_t^{s+1} & = \tilde{\g}^{s+1} + \frac{1}{a}\sum_{j_t \in \ACal_t} \left(g_{j_t}(\x_t^{s+1}) - g_{j_t}(\tilde{\x}^s)\right), \label{Update:SCVRG-g} \\
\z_t^{s+1} & = \tilde{\z}^{s+1} + \frac{1}{a}\sum_{j_t \in \ACal_t} \left(\partial g_{j_t}(\x_t^{s+1}) - \partial g_{j_t}(\tilde{\x}^s)\right). \label{Update:SCVRG-z}  \\
\sv_t^{s+1} & = \tilde{\sv}^{s+1} + \frac{1}{b}\sum_{i_t \in \BCal_t} \left([\z_t^{s+1}]^\top\nabla f_{i_t}(\g_t^{s+1}) - [\tilde{\z}^{s+1}]^\top\nabla f_{i_t}(\tilde{\g}^{s+1})\right). \label{Update:SCVRG-v} 
\end{align}}
Then we update $\x_{t+1}^{s+1}$ with a step size $\eta_{t+1}^{s+1}$:
\begin{equation}\label{Update:SCVRG-main}
\x_{t+1}^{s+1} = \prox_r^{\eta_{t+1}^{s+1}}\left(\x_t^{s+1} - \eta_{t+1}^{s+1}\sv_t^{s+1}\right). 
\end{equation}
and choose the reference point for the next epoch as the averaging iterates. Computing a full gradient vector and a full Jacobian matrix once requires $m+n$ samples and the sample complexity for the $s$th epoch is $m+n+k_s\left(a+b\right)$. 
\begin{algorithm}[!t]
\begin{algorithmic}
\STATE \textbf{Input:} $\tilde{\x}^0{=}\x_{k_0}^0{=}\x^0\in\br^d$, first epoch length $k_0$, step size $\eta>0$, the number of epochs $S$ and sample size $a, b > 0$.  
\STATE \textbf{Initialization:} $l=0$ and $T=k_02^S - k_0$.  
\FOR{$s=0,1,\ldots, S$}
\STATE $\x_0^{s+1} = \x_{k_s}^s$. 
\STATE $\tilde{\g}^{s+1} = g(\tilde{\x}^s)$. 
\STATE $\tilde{\z}^{s+1} = \partial g(\tilde{\x}^s)$. 
\STATE $\tilde{\sv}^{s+1} = \left[\tilde{\z}^{s+1}\right]^\top\nabla f(\tilde{\g}^{s+1})$. 
\STATE $k_{s+1} = k_0 2^{s+1}$. 
\FOR{$t = 0, 1, \ldots, k_{s+1} - 1$}
\STATE Uniformly sample with replacement a subset $\ACal_t \subseteq [m]$ with $|\ACal_t|=a$, and update by~\eqref{Update:SCVRG-g} and~\eqref{Update:SCVRG-z}. 
\STATE Uniformly sample with replacement a subset $\BCal_t \subseteq [n]$ with $|\BCal_t|=b$, and update by~\eqref{Update:SCVRG-v}. . 
\STATE Set $l=l+1$ and $\eta_{t+1}^{s+1}=\frac{\eta\sqrt{T}}{\sqrt{2T-l}}$, and update by~\eqref{Update:SCVRG-main}. 
\ENDFOR
\STATE $\tilde{\x}^{s+1}=\frac{1}{k_{s+1}}\sum_{t=0}^{k_{s+1}-1} \x_t^{s+1}$. 
\ENDFOR
\STATE \textbf{Output:} $\tilde{\x}^S$. 
\end{algorithmic} \caption{Stochastic Compositional Variance Reduced Gradient(SCVRG)} \label{Algorithm:SCVRG}
\end{algorithm}

Finally, we discuss the differences between Algorithm~\ref{Algorithm:SCVRG} and existing algorithms. Indeed, AGD is deterministic and SCGD/ASC-PG are compositional variants of SGD while Algorithm~\ref{Algorithm:SCVRG} is a compositional variant of SVRG. Algorithm~\ref{Algorithm:SCVRG} also differs from other existing compositional variants of SVRG~\cite{Lian-2017-Finite, Yu-2017-Fast, Huo-2018-Accelerated} since the number of inner loops is a constant in these algorithms while increasing in Algorithm~\ref{Algorithm:SCVRG}. Intuitively, Algorithm~\ref{Algorithm:SCVRG} gradually increases the number of inner loops as the iterate approaches the optimal set and yields an improved sample complexity by reducing the number of computations of the full gradient vector and Jacobian matrix. 
\begin{figure*}[!t] 
\subfloat[Book-to-Market]{%
\includegraphics[width=0.32\textwidth]{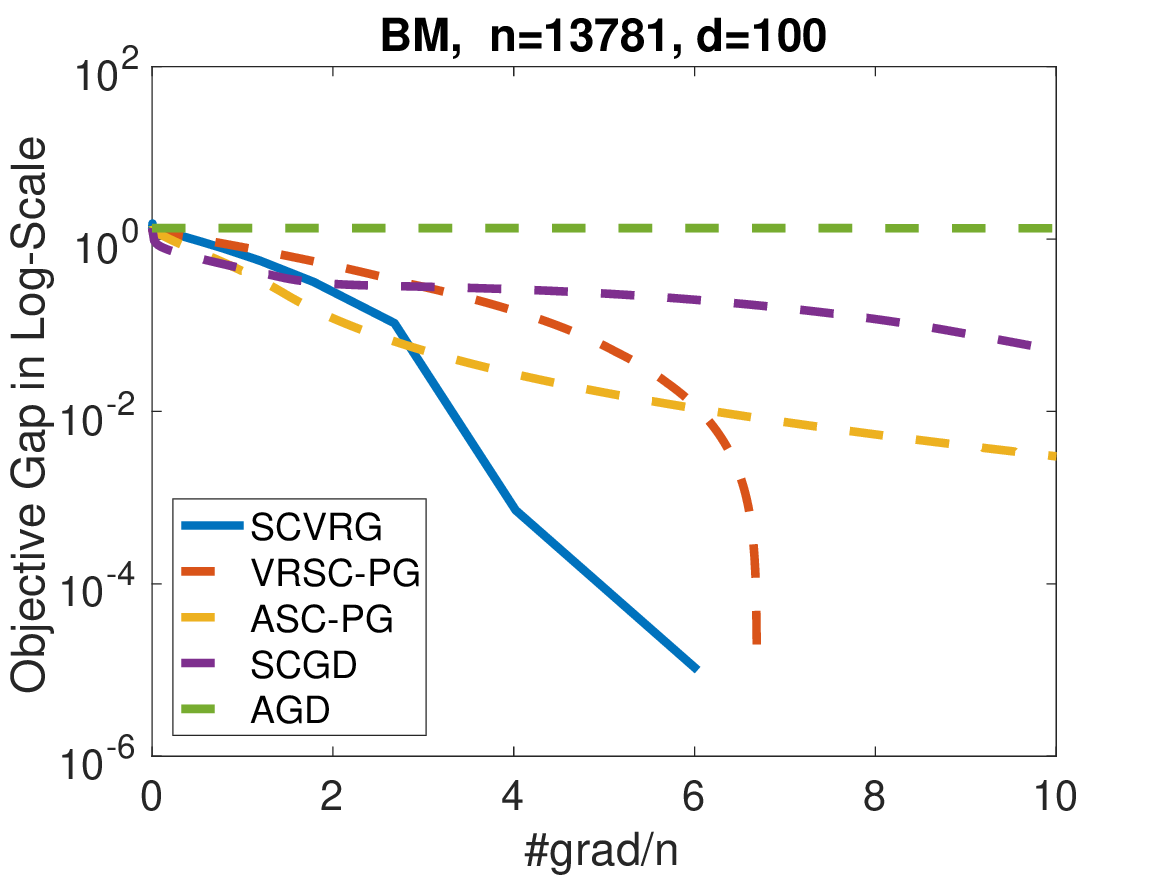}
} \quad 
\subfloat[Operating Profitability]{%
\includegraphics[width=0.32\textwidth]{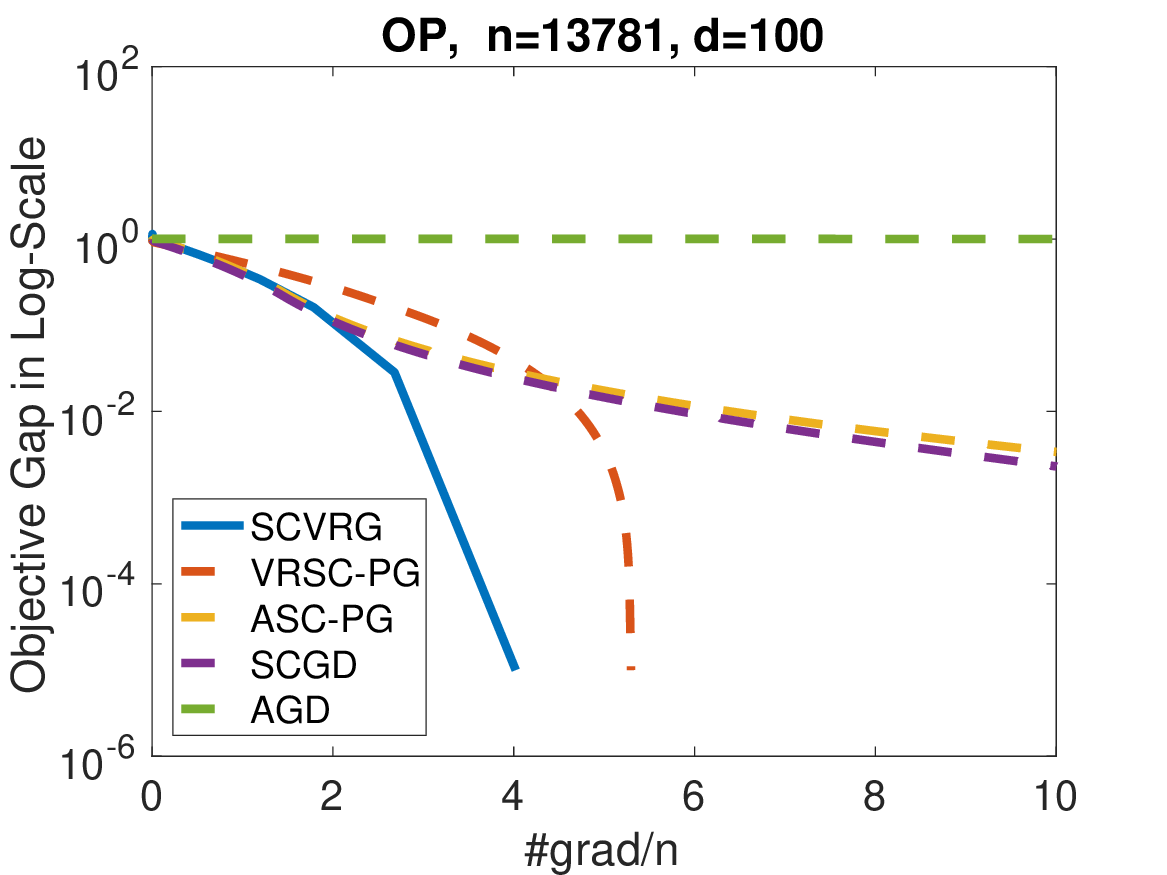}
} \quad 
\subfloat[Investment]{%
\includegraphics[width=0.32\textwidth]{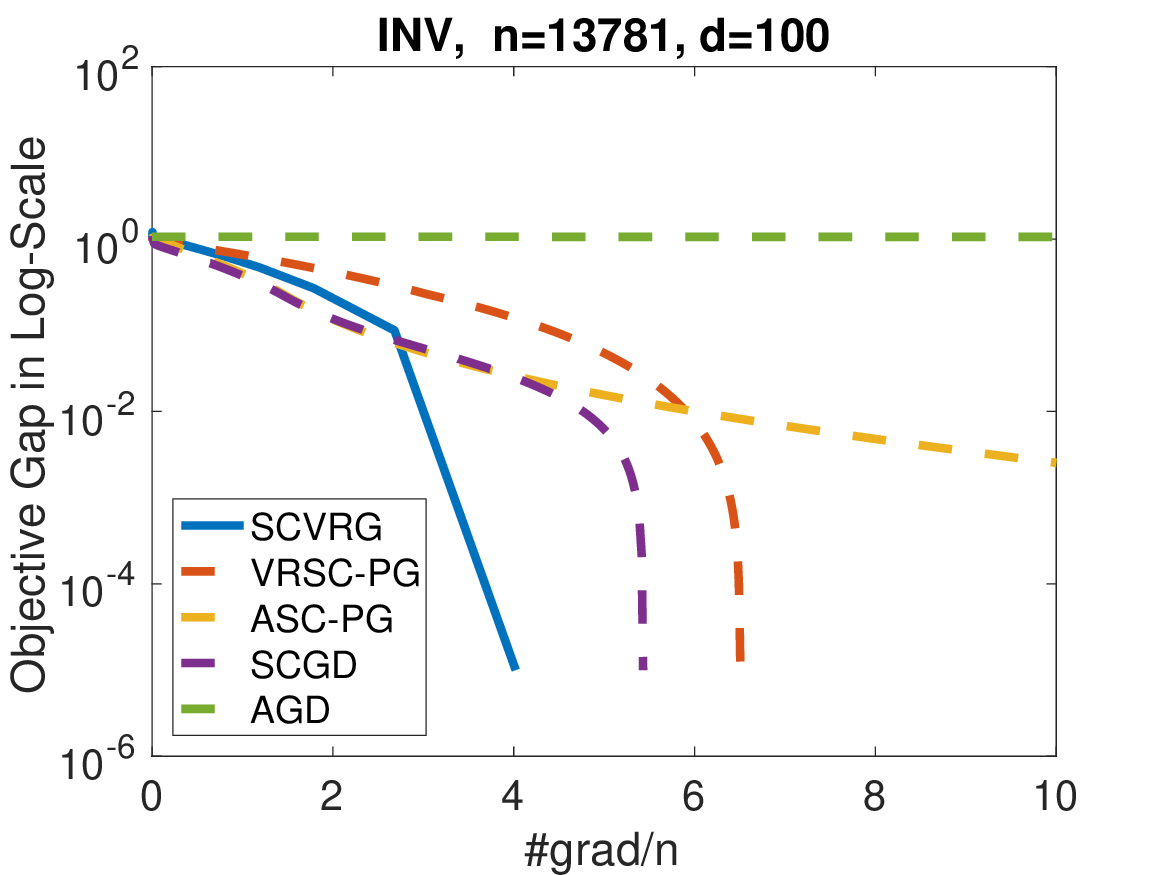}
} \quad 
\caption{The Performance of All Methods on three Large 100-Portfolio Datasets}\label{Fig:Large-Data}\vspace{-2em}
\end{figure*}

\section{Sample Complexity}\label{Sec:sample}
We provide the sample complexity of Algorithm~\ref{Algorithm:SCVRG}. 
\begin{theorem}\label{Theorem:SCVRG-Complexity-Main}
Let $\x^*\in X$ be an optimal solution, $\epsilon > 0$ be a small tolerance and let $\x^0 \in X$ satisfy $\|\x^0 - \x^*\| \leq D_\x$ and $\Phi(\x^0)-\Phi(\x^*) \leq D_\Phi$ where $D_\x, D_\Phi > 0$. The first epoch length $k_0 = 10$ and the number of epochs $S$ and $\eta > 0$ satisfy
\begin{equation*}
S = \left\lfloor\log_2\left(\frac{6D_\Phi + 15\ell D_\x^2}{\epsilon}\right) \right\rfloor + 1, \quad \eta = \frac{D_\x^2}{10D_\Phi + 25\ell D_\x^2}. 
\end{equation*}
The sample sizes $a, b > 0$ satisfy  
\begin{equation*}
a \geq \frac{1620\ell^2 D_\x^4}{\epsilon^2}, \quad b \geq \frac{810\ell^2 D_\x^4}{\epsilon^2}. 
\end{equation*}
Then the number of samples required to return $\widehat{\x} \in X$ such that $\BE[\Phi(\widehat{\x})] \leq \Phi(\x^*) + \epsilon$ is upper bounded by $O((m+n)\log(1/\epsilon) + 1/\epsilon^3)$. 
\end{theorem}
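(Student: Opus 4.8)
The plan is to chain together a per-iteration proximal-gradient inequality, sharp control of the bias and mean-squared error of the compositional estimator $\sv_t^{s+1}$, and an epochwise telescoping that exploits the doubling lengths $k_s = k_0 2^s$, and then to count samples. First I would prove a one-step inequality: fixing an epoch $s$ and an inner index $t$, abbreviating $\eta = \eta_{t+1}^{s+1}$, and using the optimality condition of~\eqref{Update:SCVRG-main} (so there is a subgradient $\xi$ of $r$ at $\x_{t+1}^{s+1}$ relative to $X$ with $\x_{t+1}^{s+1} = \x_t^{s+1} - \eta(\sv_t^{s+1}+\xi)$), then adding the $\ell$-smoothness inequality for $F$ at the pair $(\x_t^{s+1},\x_{t+1}^{s+1})$, the convexity inequalities for $F$ and $r$ at $\x^*$ from Assumption~\ref{Assumption:Objective-Convex-Main}, and the three-point identity, and finally using Young's inequality to cancel the $\|\x_{t+1}^{s+1}-\x_t^{s+1}\|^2$ terms (which needs $\eta < 1/\ell$, valid since the stated $\eta$ satisfies $\eta \le 1/(25\ell)$), one gets
\begin{equation*}
2\eta\bigl(\Phi(\x_{t+1}^{s+1})-\Phi(\x^*)\bigr) \le \|\x_t^{s+1}-\x^*\|^2 - \|\x_{t+1}^{s+1}-\x^*\|^2 + 2\eta\langle\nabla F(\x_t^{s+1})-\sv_t^{s+1},\x_t^{s+1}-\x^*\rangle + \tfrac{\eta^2}{1-\eta\ell}\bigl\|\nabla F(\x_t^{s+1})-\sv_t^{s+1}\bigr\|^2 .
\end{equation*}

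Second, and this is the heart of the argument, I would bound the bias and the conditional second moment of $\sv_t^{s+1}$. Conditioning first on $\ACal_t$ and averaging over $\BCal_t$ gives $\BE[\sv_t^{s+1}\mid\FCal_t,\ACal_t] = [\z_t^{s+1}]^\top\nabla f(\g_t^{s+1})$, because $\tilde\sv^{s+1} = [\tilde\z^{s+1}]^\top\nabla f(\tilde\g^{s+1})$; the remaining randomness in $\ACal_t$ makes $\g_t^{s+1}$ and $\z_t^{s+1}$ unbiased for $g(\x_t^{s+1})$ and $\partial g(\x_t^{s+1})$, but, because $\nabla f$ is nonlinear, it leaves the nonvanishing bias $\BE_{\ACal_t}\bigl[[\z_t^{s+1}]^\top\bigl(\nabla f(\g_t^{s+1})-\nabla f(g(\x_t^{s+1}))\bigr)\bigr]$. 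Using Assumption~\ref{Assumption:Smooth-Gradient-Jacobian-Main}, boundedness of $X$ (so $\|\z_t^{s+1}\| \le L_g + \ell_g\diam(X)$), and the elementary variance bound for an average of $a$ i.i.d.\ terms, e.g.\ $\BE\|\g_t^{s+1}-g(\x_t^{s+1})\|^2 \le \tfrac{L_g^2}{a}\|\x_t^{s+1}-\tilde\x^s\|^2$ (with the analogue for $\z_t^{s+1}$, and $1/b$ for the outer sampling), I would obtain
\begin{equation*}
\bigl\|\BE[\sv_t^{s+1}\mid\FCal_t]-\nabla F(\x_t^{s+1})\bigr\| \le \tfrac{c_1}{\sqrt a}\|\x_t^{s+1}-\tilde\x^s\| , \qquad \BE\bigl[\|\sv_t^{s+1}-\nabla F(\x_t^{s+1})\|^2\mid\FCal_t\bigr] \le c_2\bigl(\tfrac1a+\tfrac1b\bigr)\|\x_t^{s+1}-\tilde\x^s\|^2 ,
\end{equation*}
with $c_1,c_2$ polynomial in $L_f,\ell_f,L_g,\ell_g,\diam(X)$. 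The essential feature is the $1/\sqrt a$ (rather than $1/a$) in the bias, which is absent from noncompositional SVRG and is exactly what forces $a = \Omega(1/\epsilon^2)$.

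Third, I would take expectations in the one-step inequality, apply Cauchy--Schwarz to the bias term with $\|\x_t^{s+1}-\tilde\x^s\|\le 2\diam(X)$ and the bounds just derived, and sum over $t = 0,\dots,k_{s+1}-1$ within a single epoch. The distance terms telescope to $\|\x_0^{s+1}-\x^*\|^2$; since every step size $\eta_{t+1}^{s+1}$ stays in $[\eta/\sqrt2,\eta]$ (the only role of the schedule $\eta_{t+1}^{s+1}=\eta\sqrt T/\sqrt{2T-l}$), Jensen's inequality applied to $\Phi$ and the uniform epoch average $\tilde\x^{s+1}$ yields
\begin{equation*}
\BE\bigl[\Phi(\tilde\x^{s+1})-\Phi(\x^*)\bigr] \;\lesssim\; \frac{\BE\|\x_0^{s+1}-\x^*\|^2}{\eta\,k_{s+1}} \;+\; \frac{\diam(X)^2}{\sqrt a} \;+\; \Bigl(\tfrac1a+\tfrac1b\Bigr)\eta\,\diam(X)^2 .
\end{equation*}
Using boundedness of $X$ to bound $\BE\|\x_0^{s+1}-\x^*\|^2$ by a constant multiple of $D_\x^2$, taking $s = S$ with $k_{S+1}=k_0 2^{S+1}$ and the stated $S$ makes the first term $O(\epsilon)$ (the constants in $S$ and $\eta$ are tuned so this leading term is a small multiple of $\epsilon$), while $a,b = \Omega(\ell^2 D_\x^4/\epsilon^2)$ makes the other two terms $O(\epsilon)$; hence $\BE[\Phi(\widehat\x)]\le\Phi(\x^*)+\epsilon$. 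Finally, the $S+1 = O(\log(1/\epsilon))$ epochs each cost $m+n$ for the full gradient and Jacobian plus $k_{s+1}(a+b)$ for the inner loop, and $\sum_{s=0}^S k_{s+1} = 2k_0(2^{S+1}-1) = O(2^S) = O(1/\epsilon)$ while $a+b = O(1/\epsilon^2)$, so the total sample count is $O((m+n)\log(1/\epsilon)) + O(\tfrac1\epsilon\cdot\tfrac1{\epsilon^2}) = O((m+n)\log(1/\epsilon)+1/\epsilon^3)$.

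The step I expect to be the main obstacle is the second one: unlike vanilla SVRG, $\sv_t^{s+1}$ is genuinely biased, and one must show the bias scales like $\|\x_t^{s+1}-\tilde\x^s\|/\sqrt a$; getting the dependence on the problem constants right so that all three error terms in the third step can be driven below $\epsilon$ with the stated $\eta,a,b$, while simultaneously keeping the doubling-epoch and step-size bookkeeping consistent, is the delicate part.
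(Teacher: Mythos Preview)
Your argument is essentially correct and reaches the stated $O((m+n)\log(1/\epsilon)+1/\epsilon^3)$ bound, but the route is genuinely different from the paper's, and the two approaches trade off tightness against simplicity.

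The paper introduces an auxiliary \emph{unbiased} estimator $\su_t^{s+1}$ (replace $\z_t^{s+1},\g_t^{s+1}$ in~\eqref{Update:SCVRG-v} by the exact $\partial g(\x_t^{s+1}),g(\x_t^{s+1})$) and decomposes $\sv_t^{s+1}-\nabla F(\x_t^{s+1})$ into $\sv_t^{s+1}-\su_t^{s+1}$ plus $\su_t^{s+1}-\nabla F(\x_t^{s+1})$. The first piece is bounded exactly as you do, by $2\ell^2\|\x_t^{s+1}-\tilde\x^s\|^2/a$. For the second piece the paper does \emph{not} use the crude Lipschitz bound $\tfrac{\ell^2}{b}\|\x_t^{s+1}-\tilde\x^s\|^2$; it instead applies the Allen-Zhu trick of defining $\varphi_i(\x)=f_i(g(\x))-\langle\nabla F(\x^*),\x-\x^*\rangle+\tfrac{\ell}{2}\|\x-\x^*\|^2$ (convex, $2\ell$-smooth, minimized at $\x^*$) so that the outer-sampling variance is controlled by the \emph{function-value gaps} $\Phi(\x_t^{s+1})-\Phi(\x^*)+\Phi(\tilde\x^s)-\Phi(\x^*)$ plus distance terms. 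These gaps are then partially absorbed into the left-hand side. Secondly, in the paper the increasing step-size schedule is not cosmetic: after Young's inequality on the bias term with parameter $\beta$ one picks up $\beta\ell\|\x_t^{s+1}-\x^*\|^2$ per step, and the key identity $\tfrac{1}{\eta_t^{s+1}}-\tfrac{1}{\eta_{t+1}^{s+1}}\ge \tfrac{1}{2\sqrt2\,\eta T}\ge 10\beta\ell$ is what absorbs this into the telescoping of $\|\x_t^{s+1}-\x^*\|^2/\eta_t^{s+1}$. Finally, the paper telescopes a combined Lyapunov quantity $E_s$ across epochs and shows $E_{s+1}\le E_s/2$, which is what produces final constants in terms of $D_\x$ and $D_\Phi$ (matching the specific parameter choices in the theorem statement).

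Your approach sidesteps all three devices by invoking boundedness of $X$ to replace every $\|\x_t^{s+1}-\tilde\x^s\|$ and $\|\x_t^{s+1}-\x^*\|$ by $\diam(X)$. This is legitimate under Assumption~\ref{Assumption:Objective-Convex-Main} and does deliver the big-$O$ conclusion, but note two consequences: (i) your hidden constants depend on $\diam(X)$ rather than the potentially much smaller $D_\x$, so the \emph{specific} values of $\eta,a,b$ in the theorem would not verify $\epsilon$-optimality in your analysis without further assuming $\diam(X)$ comparable to $D_\x$; and (ii) once you bound $\|\x_t^{s+1}-\tilde\x^s\|$ by a constant, neither the variance-reduction reference points nor the epoch doubling nor the step-size schedule are doing real work in your proof---a single long epoch would give the same bound. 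That is fine for the asymptotic statement, but it obscures why the algorithm is designed the way it is.
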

\begin{remark}
We discuss the results presented in Table~\ref{Tab:SCGD}. Indeed, the sample complexity of AGD, VRSC-PG and Algorithm~\ref{Algorithm:SCVRG} depend on $m+n$ while that of SCGD and ASC-PG is independent at the expense of worse dependence on $\epsilon$. The advantage of Algorithm~\ref{Algorithm:SCVRG} is that $m+n$ is \textbf{independent} of $\epsilon$ up to a logarithmic factor. Compared to VRSC-PG, Algorithm~\ref{Algorithm:SCVRG} is better if $m+n = O(\epsilon^{-\alpha})$ for $\alpha \in [1.5, 6]$. Compared to ASC-PG, Algorithm~\ref{Algorithm:SCVRG} is better than if $m+n = O(\epsilon^{-\alpha})$ for $\alpha \in [0, 3.5]$. This explains why Algorithm~\ref{Algorithm:SCVRG} performs better in many applications. For example, $\epsilon = 0.001$ is a common choice in practice to avoid over-fitting the training data, while many real-world learning problems have $m+n \in [10^5, 10^{10}]$ number of samples. 
\end{remark}
\begin{remark}
Algorithm~\ref{Algorithm:SCVRG} benefits from using adaptive $\eta_t^s$, which is crucial to the effectiveness and robustness of the algorithms when a huge number of iterations are required. 
\end{remark}

\section{Experiments} \label{Sec:results}
In this section, we present numerical results on real-world datasets\footnote{http://mba.tuck.dartmouth.edu/pages/faculty/ken.french/Data\_Library/}, including 3 large 100-portfolio datasets and 15 medium 25-portfolio datasets as shown in Table~\ref{Tab:Stat}.

Given $d$ assets and the reward vectors at $N$ time points, the goal of sparse mean-variance optimization~\cite{Ravikumar-2009-Sparse} is to maximize the return of the investment as well as to control the investment risk:
\begin{equation}\label{prob:SpMO}\small
\min\limits_{\x \in X} \ \frac{1}{N}\sum_{i=1}^N \left(\left\langle \sr_i, \x\right\rangle - \frac{1}{N}\sum_{i=1}^N \left\langle \sr_i, \x\right\rangle\right)^2 - \frac{1}{N}\sum_{i=1}^N \left\langle \sr_i, \x\right\rangle + \lambda\|\x\|_1.  
\end{equation}
Problem~\eqref{prob:SpMO} is exactly problem~\eqref{prob:N} with $m=n=N$, $f_i(\z, y) = \left(\left\langle \sr_i, \z\right\rangle + y\right)^2 - \left\langle \sr_i, \z\right\rangle$, $g_j(\x) = \left(\x^\top, -\left\langle \sr_j, \x\right\rangle\right)^\top$ for $\x,\z\in\br^d$ and $y\in\br$, $r(\cdot) = \|\cdot\|_1$ and $X$ is a bounded set. This problem satisfies Assumption~\ref{Assumption:Objective-Convex-Main} and~\ref{Assumption:Smooth-Gradient-Jacobian-Main} and serves as a typical example of the problem studied in this paper.   
\begin{table}[!t]
\caption{Statistics of CRSP Real Datasets}\label{Tab:Stat}\vspace{-.5em}
\scriptsize 
\centering
\begin{tabular}{|l|c|c|l|} \hline
Size & $N$ & $d$ & Problems \\ \hline
\multirow{3}{*}{Large} & \multirow{3}{*}{13781} & \multirow{3}{*}{100} & Book-to-Market (BM) \\
& & & Operating Profitability (OP) \\
& & & Investment (INV) \\ \hline
\multirow{3}{*}{Medium} & \multirow{3}{*}{7240} & \multirow{3}{*}{25} & BM: Asia, Europe, Global, Japan, America \\
& & & OP: Asia, Europe, Global, Japan, America \\
& & & INV: Asia, Europe, Global, Japan, America \\ \hline
\end{tabular}\vspace{-2.5em}
\end{table}

We denote Algorithm~\ref{Algorithm:SCVRG} as SCVRG and compare it with a line of existing algorithms. The implementations of baseline algorithms are provided by their authors with default parameters. We exclude the algorithm in \cite{Lian-2017-Finite, Yu-2017-Fast} since problem~\eqref{prob:SpMO} is neither smooth nor strongly convex. We set $A=B=5$ and $\eta = 0.01$, and choose $\lambda$ by cross validation. We use the number of samples used divided by $N$ for the $x$-axis and $\Phi(\x)-\Phi^*$ on a log scale for the $y$-axis\footnote{$\Phi^*$ is achieved by running Algorithm~\ref{Algorithm:SCVRG} until convergence to a highly accurate solution.}.

Figure~\ref{Fig:Large-Data} shows that SCVRG outperforms other algorithms on large datasets. In particular, SCVRG and VRSC-PG are more robust than SCGD and ASC-PG, demonstrating the superiority of variance reduction and constant stepsizes. AGD performs the worst because of the highest per-iteration sample complexity on large datasets. Figures~\ref{Fig:Book_to_Market}-\ref{Fig:Investment} show that SCVRG outperforms other algorithms on these datasets. This is consistent with the better complexity bound of SCVRG. Furthermore, VRSC-PG is less robust than SCVRG possibly because the number of inner loops in SCVRG is increasing which provides flexibility in handling different dataset sizes. Overall, SCVRG has the potential to be a benchmark algorithm for convex composition optimization. 
\begin{figure*}[!t] 
\subfloat[Asia]{
\includegraphics[width=0.2\textwidth]{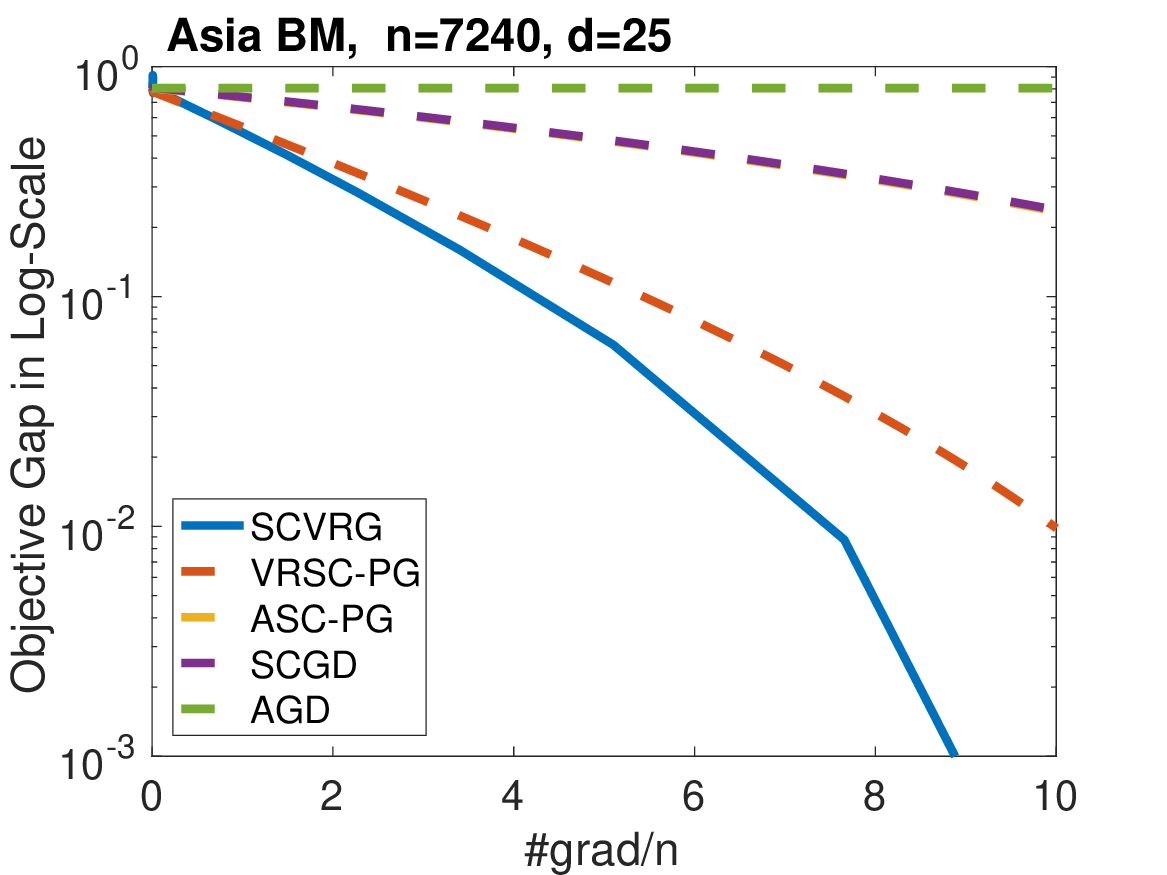}%
}
\subfloat[Europe]{%
\includegraphics[width=0.2\textwidth]{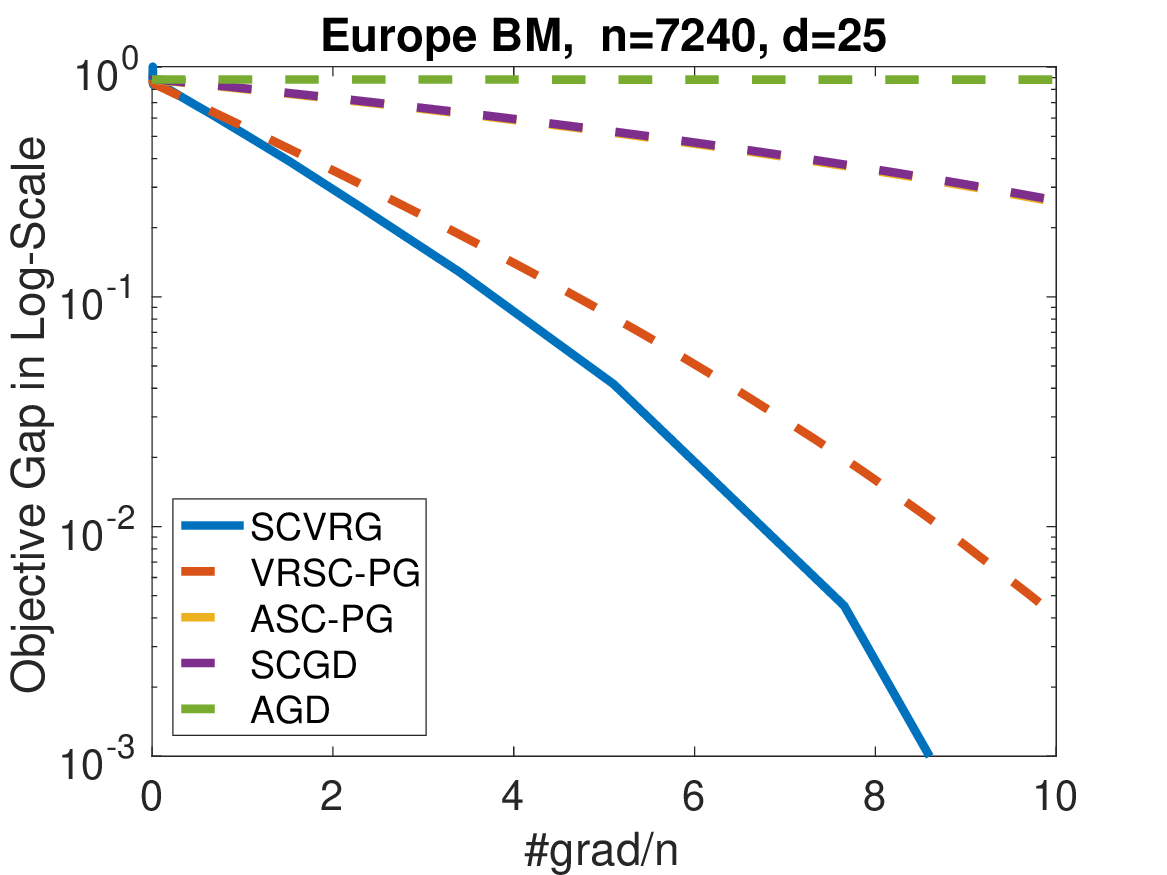}%
}%
\subfloat[Global]{%
\includegraphics[width=0.2\textwidth]{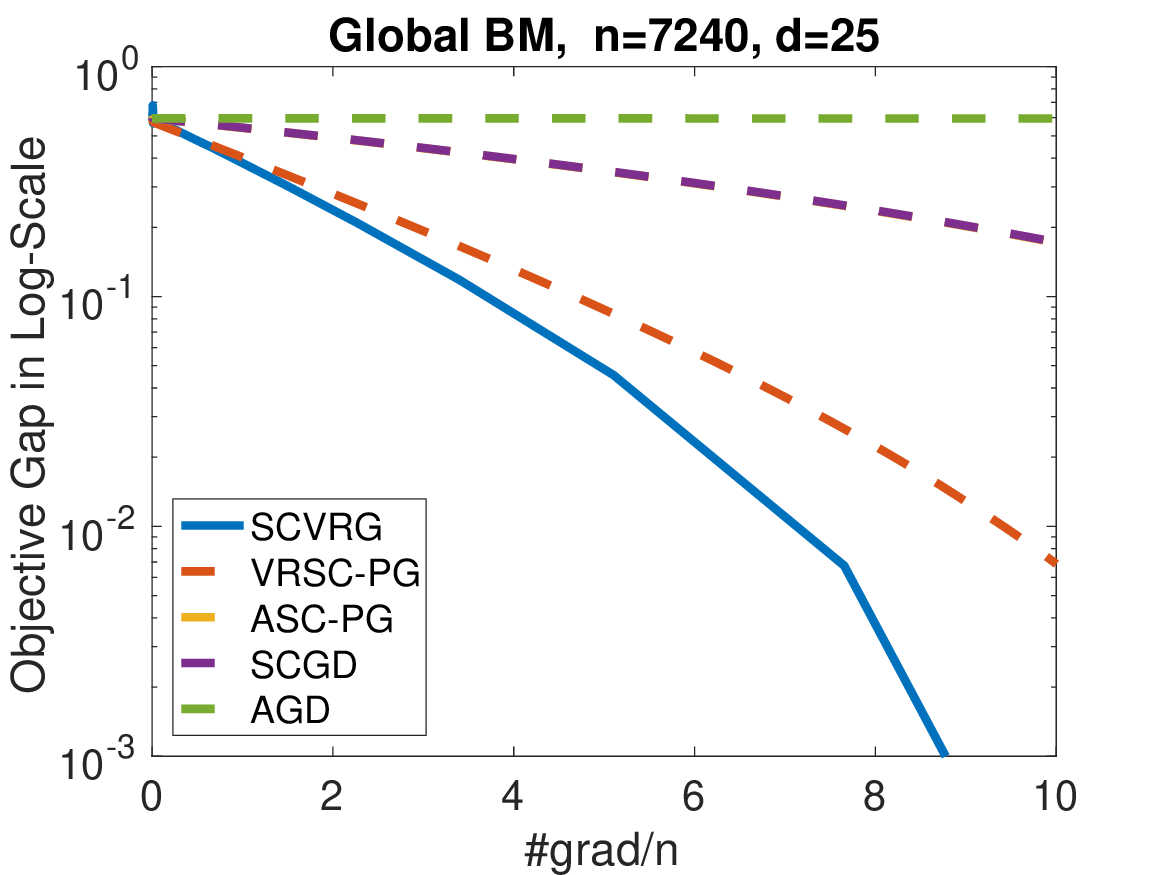}%
}%
\subfloat[Japan]{%
\includegraphics[width=0.2\textwidth]{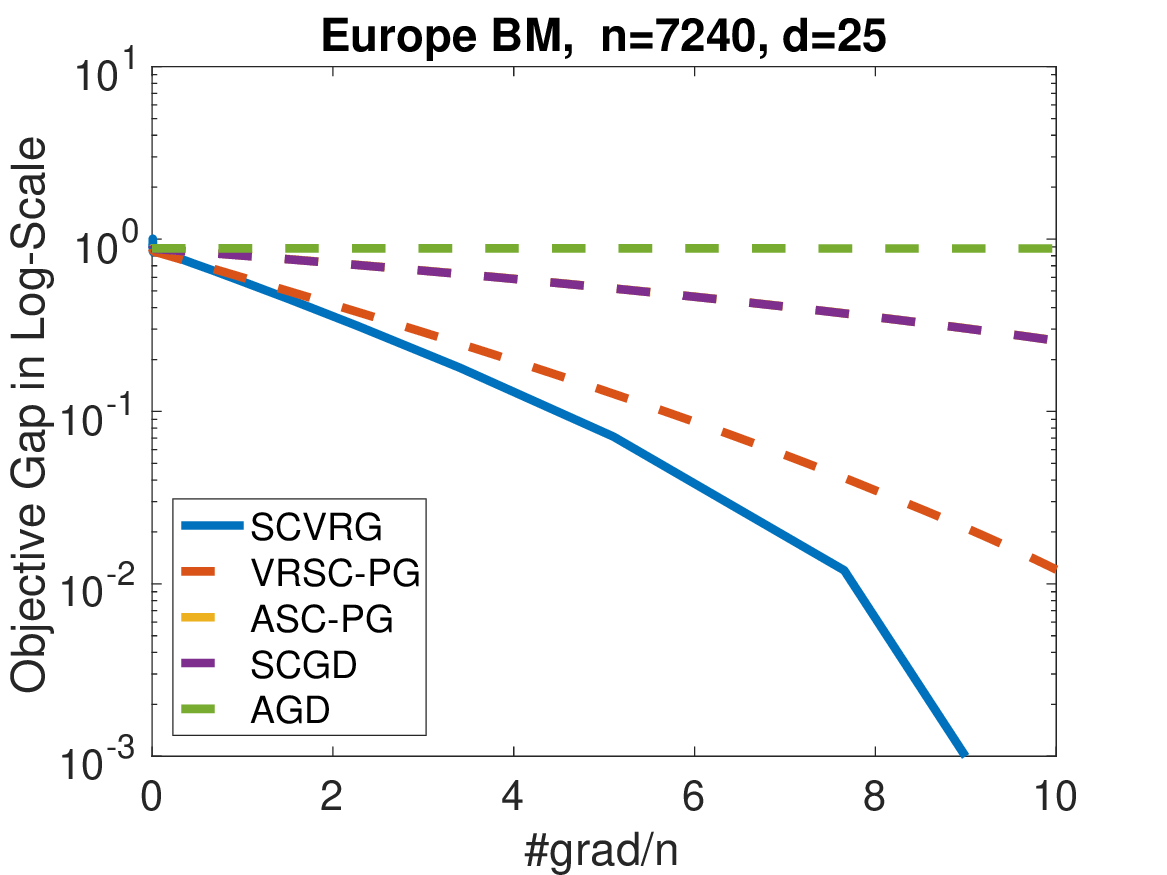}%
}%
\subfloat[America]{%
\includegraphics[width=0.2\textwidth]{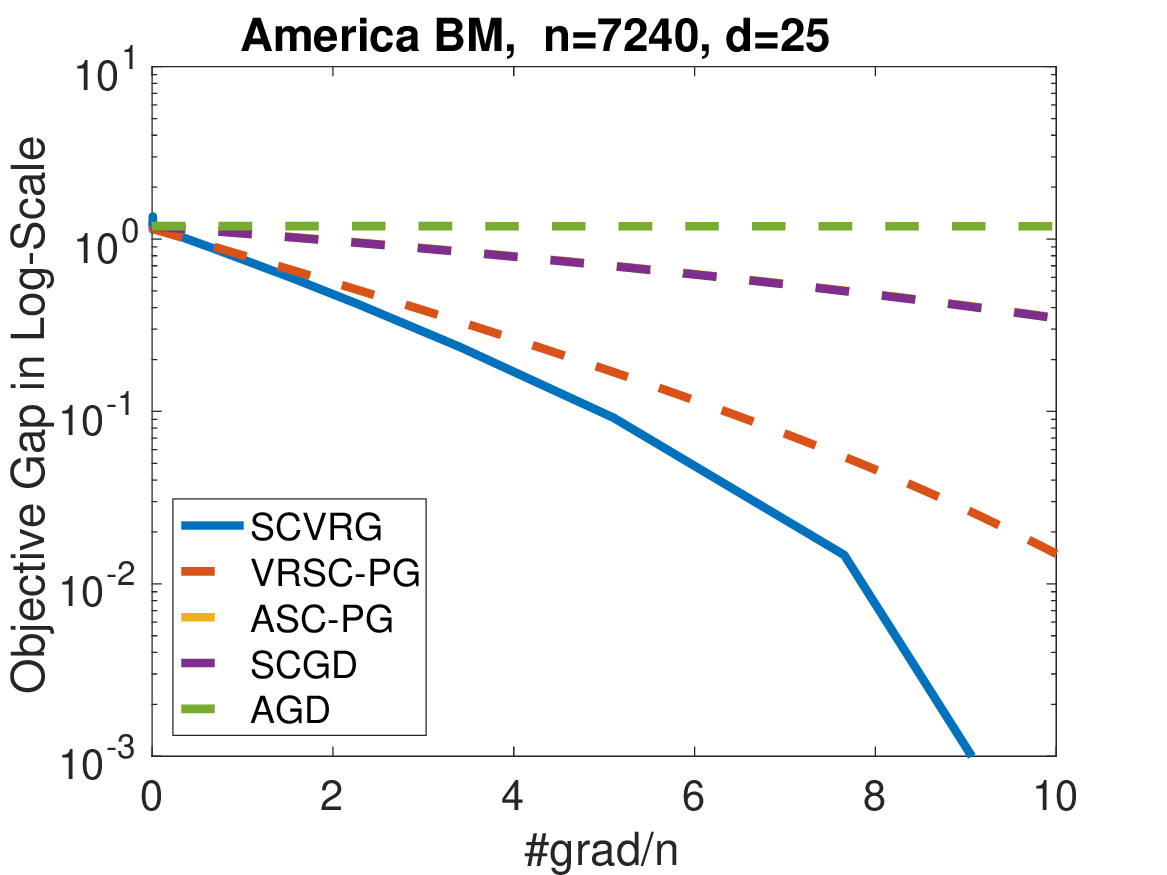}%
}%
\caption{The Performance of All Methods on five 25-Portfolio Book-to-Market Datasets}\label{Fig:Book_to_Market}\vspace{-2em}
\end{figure*}
\begin{figure*}[!t] 
\subfloat[Asia]{
\includegraphics[width=0.2\textwidth]{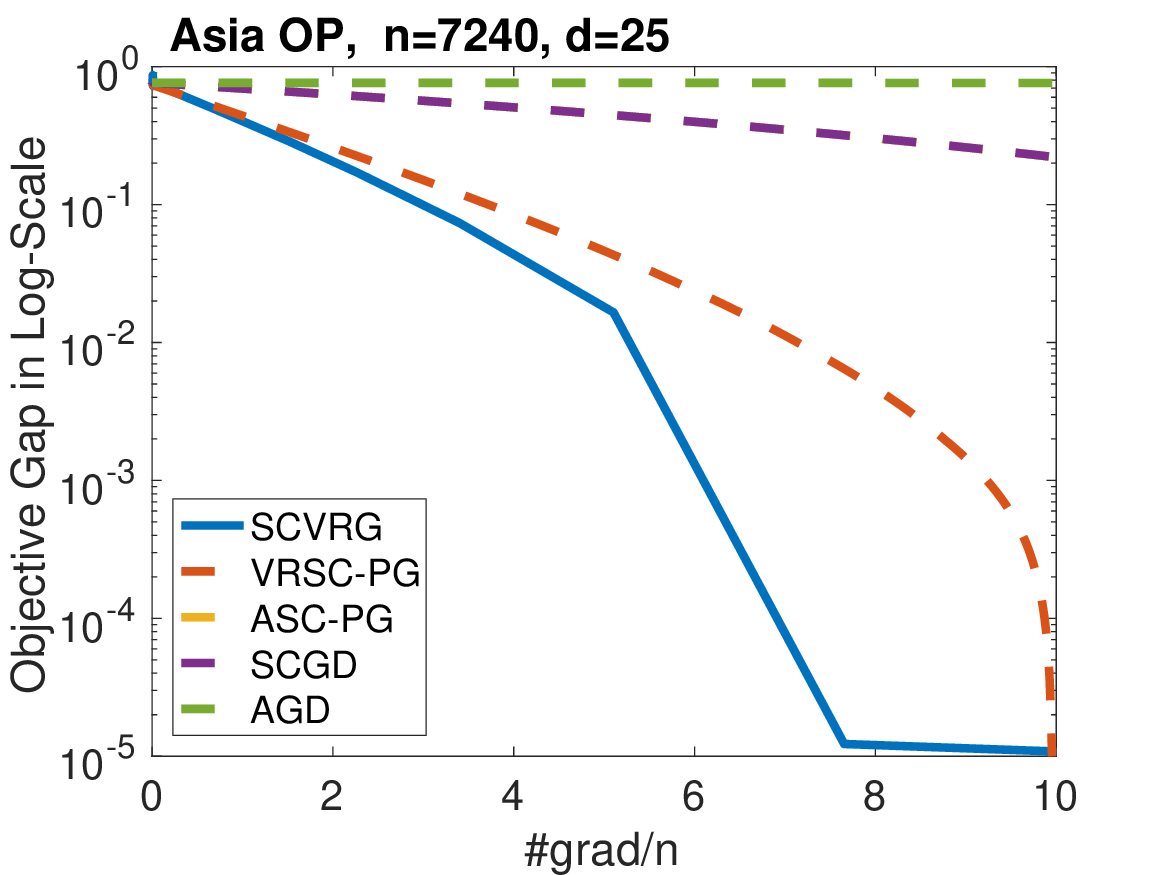}%
}
\subfloat[Europe]{%
\includegraphics[width=0.2\textwidth]{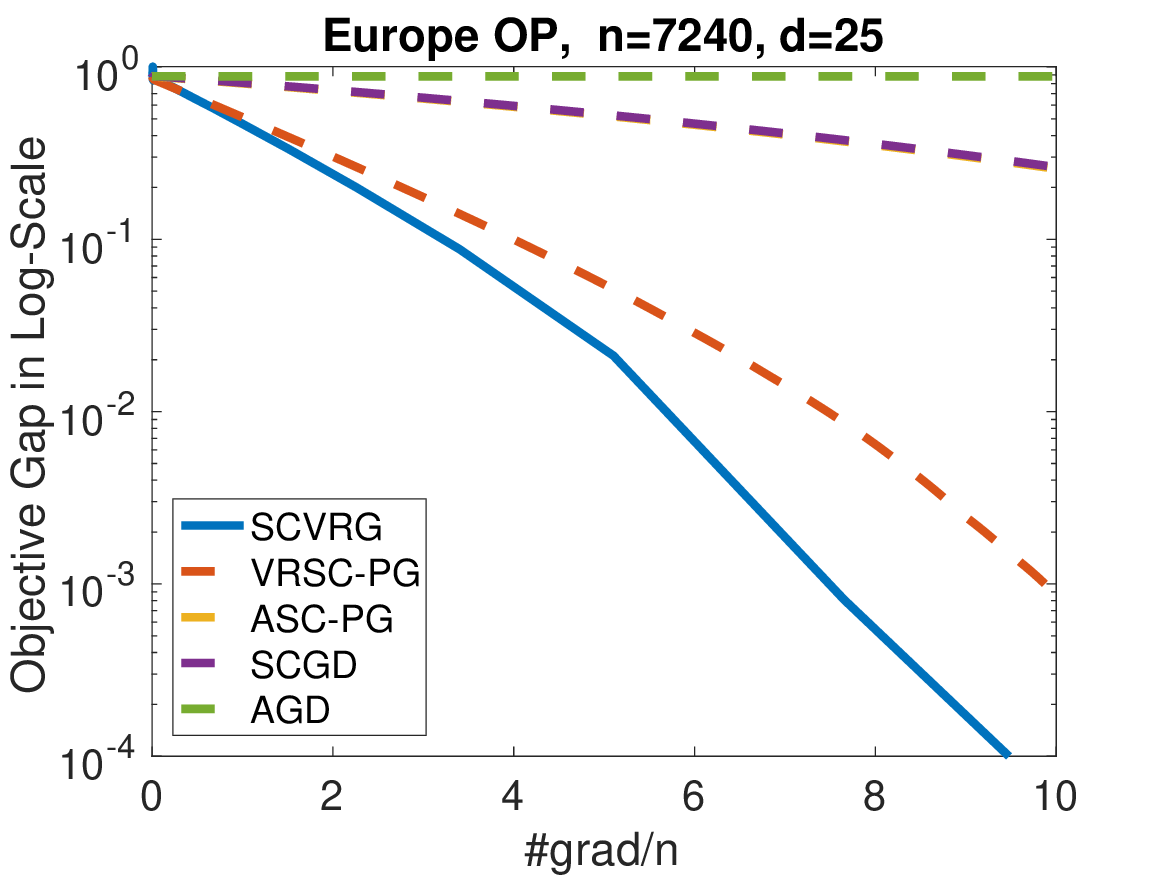}%
}%
\subfloat[Global]{%
\includegraphics[width=0.2\textwidth]{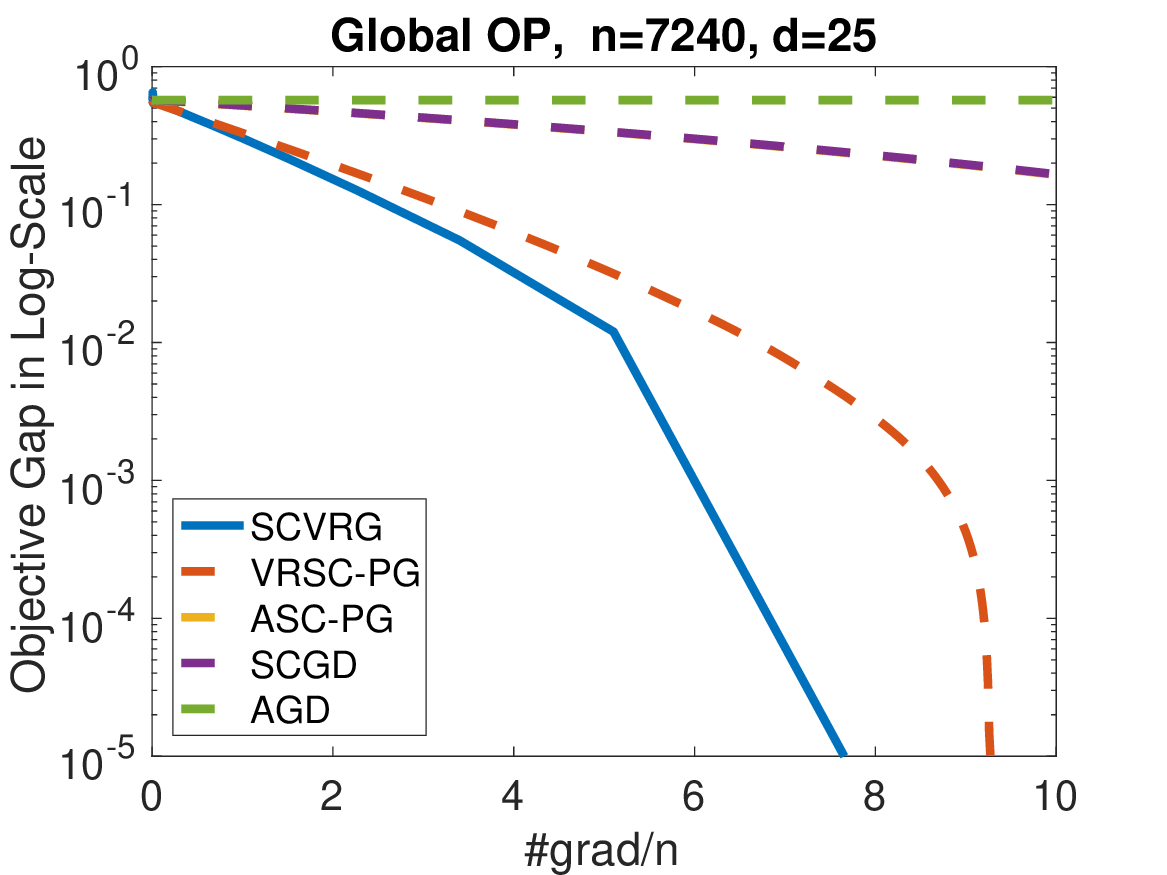}%
}%
\subfloat[Japan]{%
\includegraphics[width=0.2\textwidth]{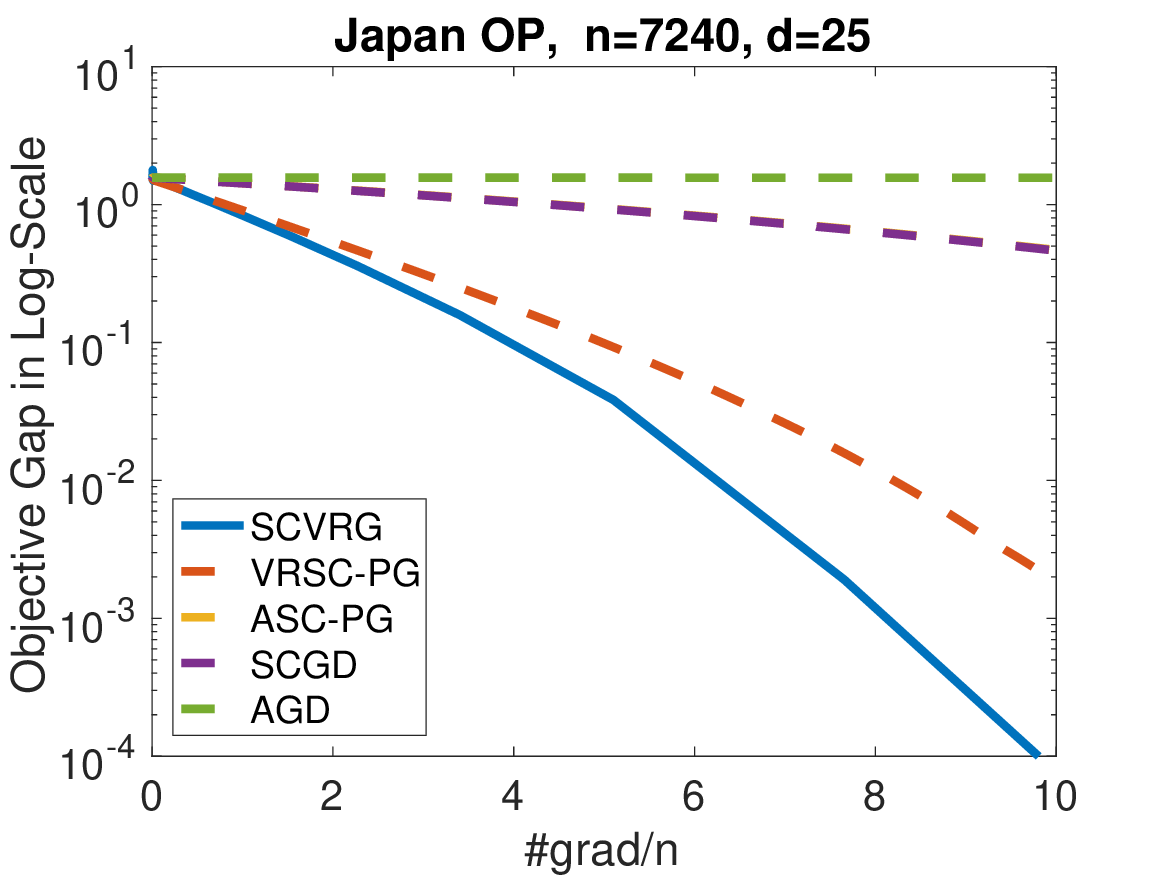}%
}%
\subfloat[America]{%
\includegraphics[width=0.2\textwidth]{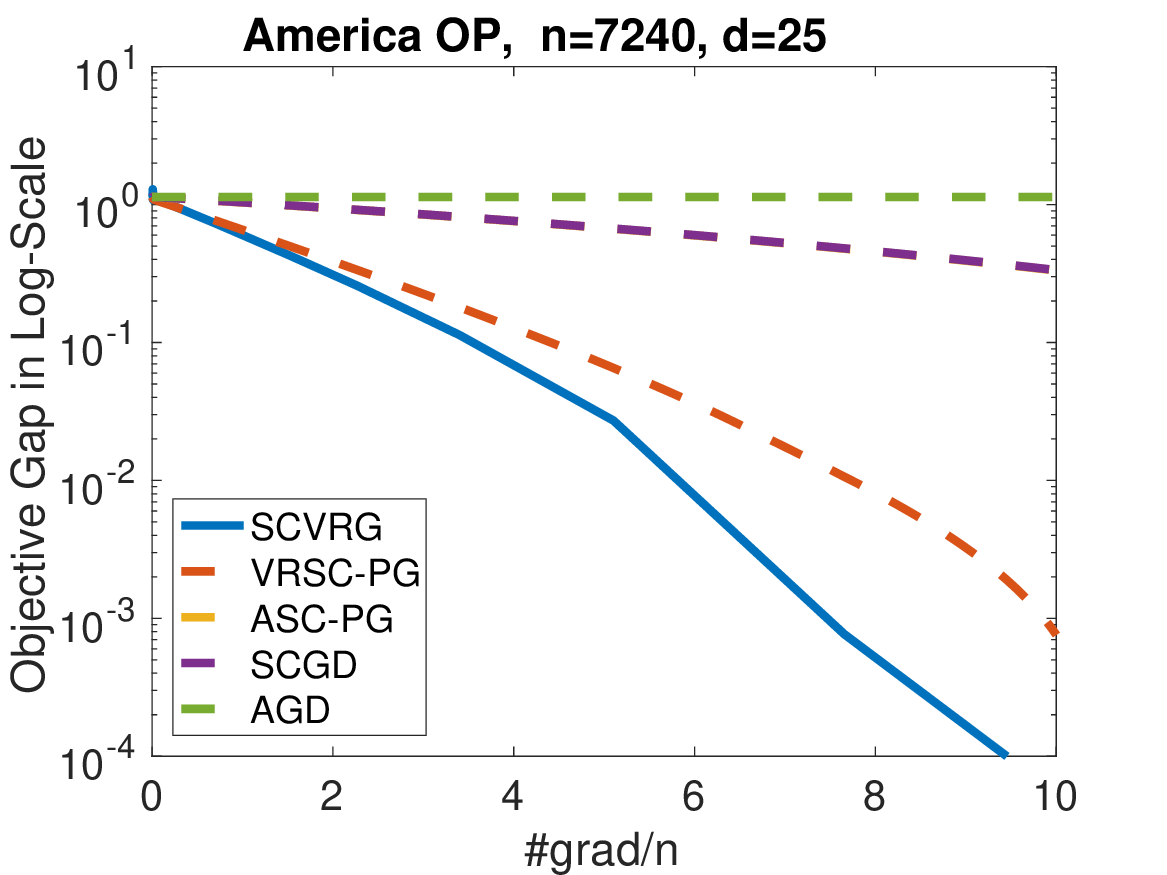}%
}%
\caption{The Performance of All Methods on five 25-Portfolio Operating Profitability Datasets} \label{Fig:Operating_Profitability}\vspace{-2em}
\end{figure*}
\begin{figure*}[!t]
\subfloat[Asia]{
\includegraphics[width=0.2\textwidth]{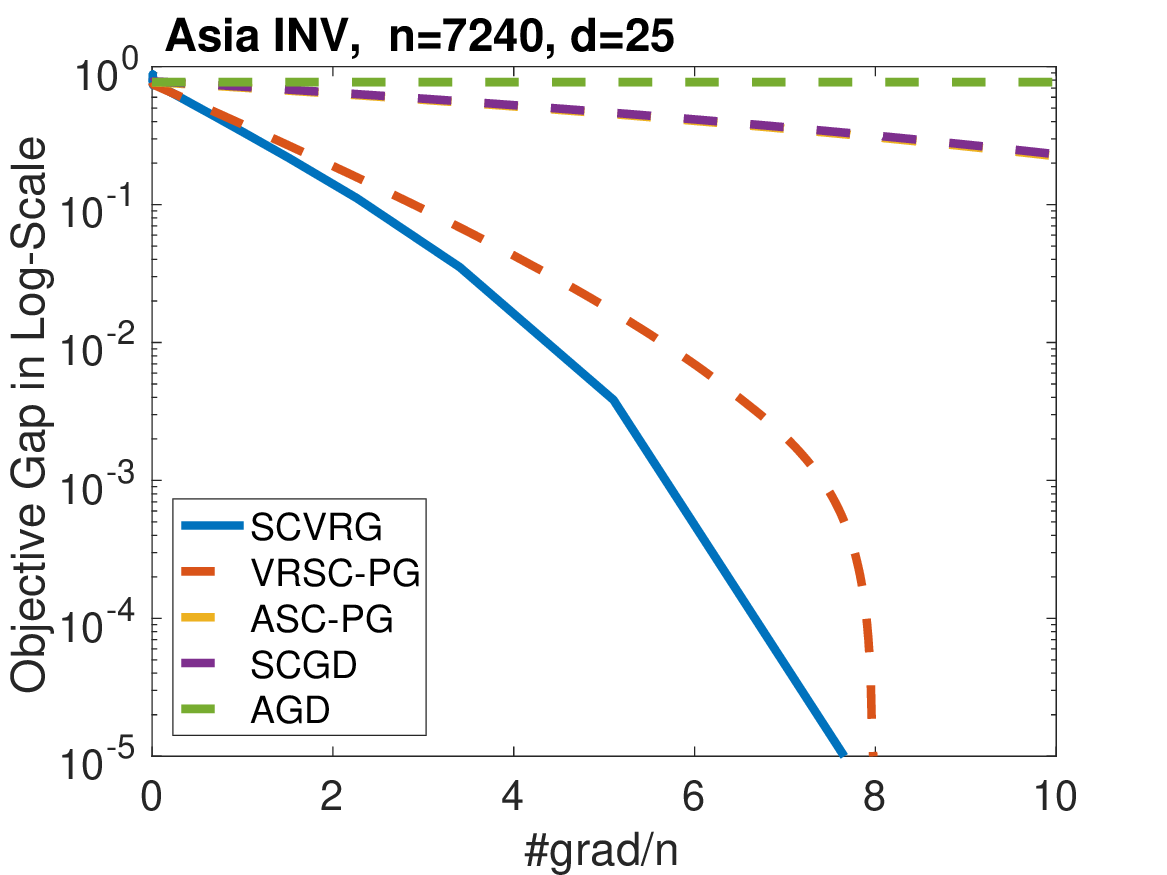}%
}
\subfloat[Europe]{%
\includegraphics[width=0.2\textwidth]{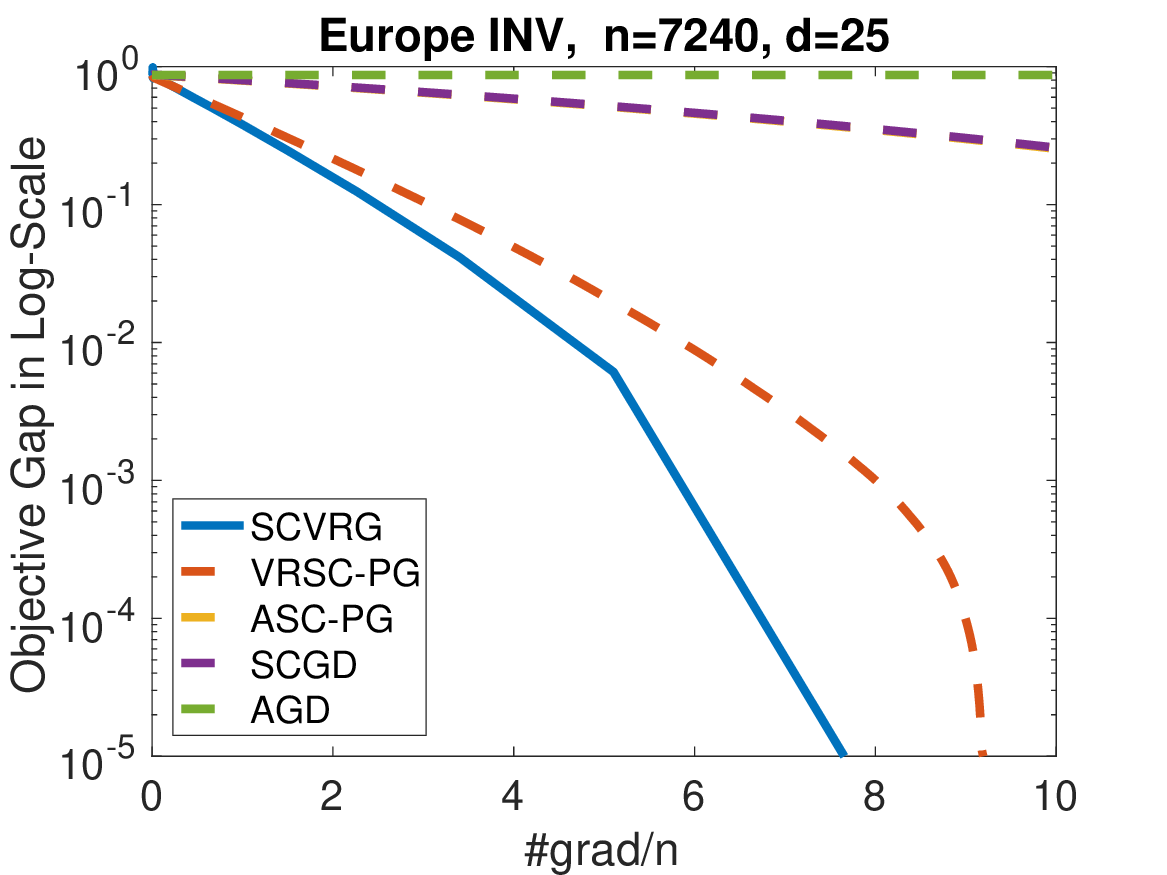}%
}%
\subfloat[Global]{%
\includegraphics[width=0.2\textwidth]{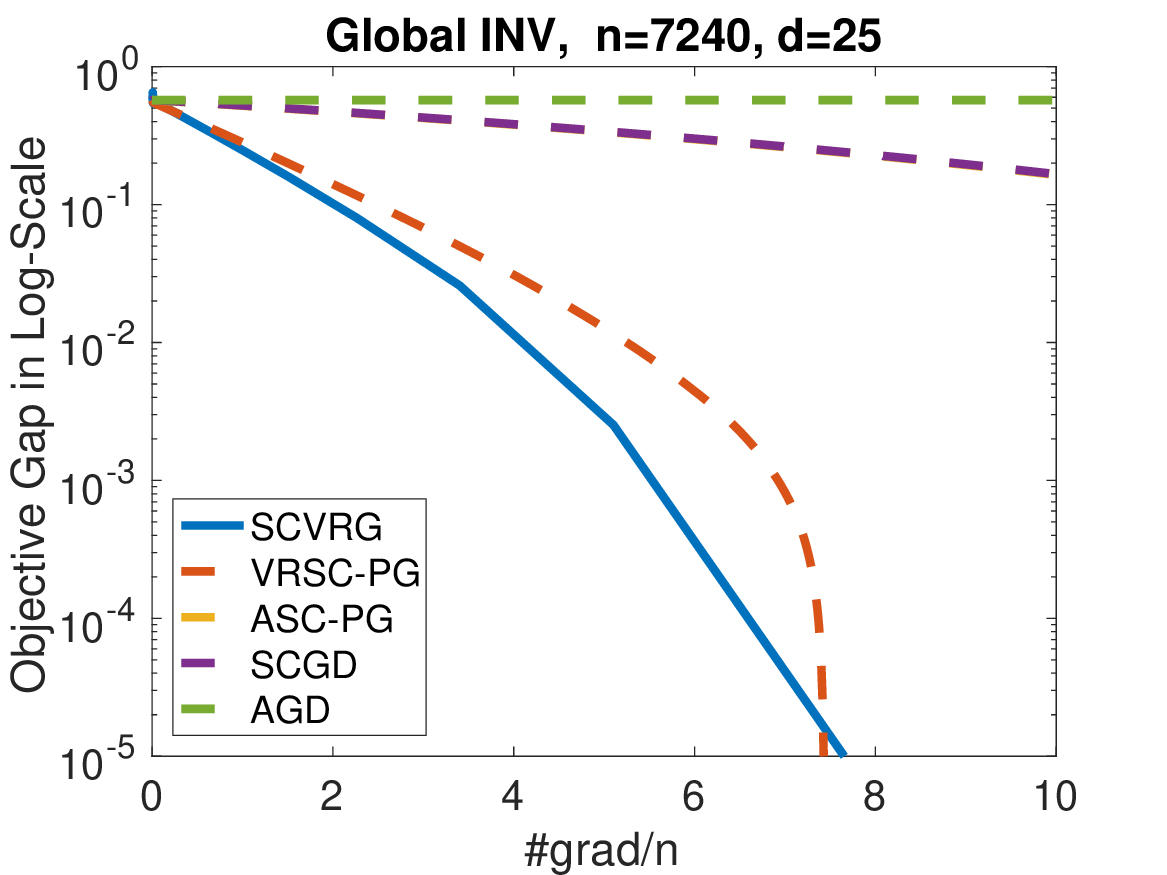}%
}%
\subfloat[Japan]{%
\includegraphics[width=0.2\textwidth]{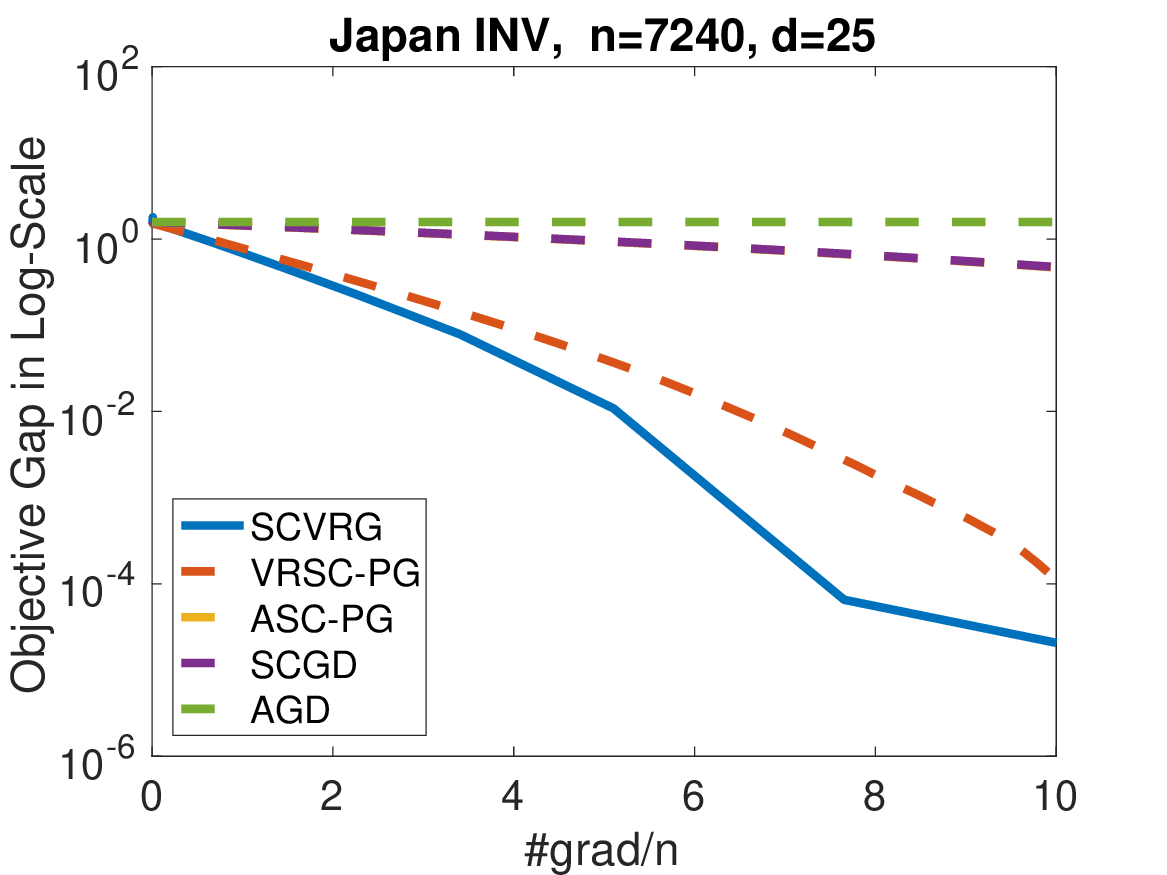}%
}%
\subfloat[America]{%
\includegraphics[width=0.2\textwidth]{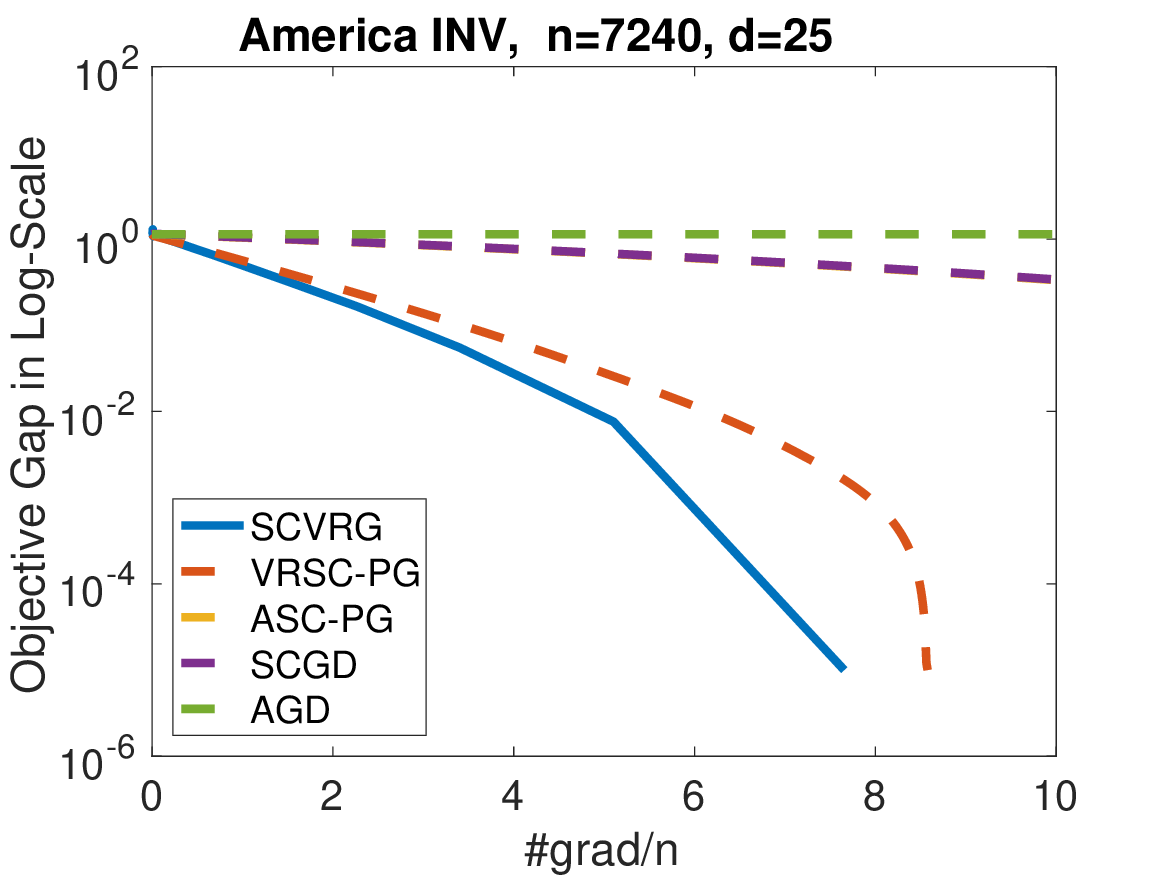}%
}%
\caption{The Performance of All Methods on five 25-Portfolio Investment Datasets}\label{Fig:Investment}\vspace{-2em}
\end{figure*}
\section{Conclusions}
We propose a stochastic compositional variance gradient algorithm for convex composition optimization with an improved sample complexity. Experiments on real-world datasets demonstrate the efficiency of our new algorithm. Future research could establish a lower bound for the sample complexity of convex composition optimization. 
\section*{APPENDIX}
We define an \textit{unbiased} estimate of $\nabla F(\x_t^{s+1})$ as
\begin{align*}
\su_t^{s+1} = & \tilde{\sv}^{s+1} + \frac{1}{b}\sum_{i_t \in \BCal_t} ([\partial g(\x_t^{s+1})]^\top\nabla f_{i_t}(g(\x_t^{s+1})) \\
& - [\tilde{\z}^{s+1}]^\top\nabla f_{i_t}(\tilde{\g}^{s+1})).  
\end{align*}
\begin{lemma}\label{Lemma:SampleGradient-Variance}
We have the following inequality, 
\begin{equation*}
\BE[\|\sv_t^{s+1} - \su_t^{s+1}\|^2 \mid \x_t^{s+1}, \tilde{\x}^s] \leq \frac{2\ell^2(\|\x_t^{s+1} - \tilde{\x}^s\|^2)}{a}. 
\end{equation*}
\end{lemma}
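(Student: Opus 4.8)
The plan is to observe that $\sv_t^{s+1}$ and $\su_t^{s+1}$ use the \emph{same} outer minibatch $\BCal_t$ and differ only in that the inner quantities $g(\x_t^{s+1}),\partial g(\x_t^{s+1})$ are replaced by the sampled surrogates $\g_t^{s+1},\z_t^{s+1}$; subtracting the two definitions cancels $\tilde{\sv}^{s+1}$ together with the reference terms, leaving $\sv_t^{s+1}-\su_t^{s+1}=\frac1b\sum_{i_t\in\BCal_t}\delta_{i_t}$ with $\delta_i:=[\z_t^{s+1}]^\top\nabla f_i(\g_t^{s+1})-[\partial g(\x_t^{s+1})]^\top\nabla f_i(g(\x_t^{s+1}))$. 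I would then condition first on $\ACal_t$, which makes $\g_t^{s+1},\z_t^{s+1}$ and hence every $\delta_i$ deterministic, and split the expectation through the tower rule as $\BE_{\ACal_t}\BE_{\BCal_t}[\,\cdot\mid\ACal_t,\x_t^{s+1},\tilde{\x}^s]$. Since $\BCal_t$ consists of $b$ i.i.d. uniform draws from $[n]$, convexity of $\|\cdot\|^2$ yields $\BE_{\BCal_t}\|\sv_t^{s+1}-\su_t^{s+1}\|^2\le\frac1b\sum_{i_t\in\BCal_t}\BE_{\BCal_t}\|\delta_{i_t}\|^2=\frac1n\sum_{i=1}^n\|\delta_i\|^2$, so it suffices to bound each $\|\delta_i\|^2$ and then average over $\ACal_t$.

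Next I would bound $\|\delta_i\|$ pointwise using the split $\delta_i=([\z_t^{s+1}]^\top-[\partial g(\x_t^{s+1})]^\top)\nabla f_i(\g_t^{s+1})+[\partial g(\x_t^{s+1})]^\top(\nabla f_i(\g_t^{s+1})-\nabla f_i(g(\x_t^{s+1})))$. By Assumption~\ref{Assumption:Smooth-Gradient-Jacobian-Main} we have $\|\nabla f_i(\cdot)\|\le L_f$ (gradient bound from $L_f$-Lipschitzness), $\|\partial g(\x_t^{s+1})\|=\|\frac1m\sum_{j=1}^m\partial g_j(\x_t^{s+1})\|\le L_g$, and $\|\nabla f_i(\g_t^{s+1})-\nabla f_i(g(\x_t^{s+1}))\|\le\ell_f\|\g_t^{s+1}-g(\x_t^{s+1})\|$, hence $\|\delta_i\|\le L_f\|\z_t^{s+1}-\partial g(\x_t^{s+1})\|+L_g\ell_f\|\g_t^{s+1}-g(\x_t^{s+1})\|$ and so $\|\delta_i\|^2\le 2L_f^2\|\z_t^{s+1}-\partial g(\x_t^{s+1})\|^2+2L_g^2\ell_f^2\|\g_t^{s+1}-g(\x_t^{s+1})\|^2$, a bound independent of $i$. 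Keeping the \emph{exact} Jacobian $\partial g(\x_t^{s+1})$ rather than the surrogate $\z_t^{s+1}$ in the second term is the step that later produces the sharp constant, since $\|\partial g(\x_t^{s+1})\|\le L_g$ is tighter than any direct bound on $\|\z_t^{s+1}\|$.

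Finally I would control the two inner variances. Writing $\g_t^{s+1}-g(\x_t^{s+1})=\frac1a\sum_{j_t\in\ACal_t}(Y_{j_t}-\frac1m\sum_{j=1}^m Y_j)$ with $Y_j:=g_j(\x_t^{s+1})-g_j(\tilde{\x}^s)$, the summands are i.i.d. and mean-zero under the uniform draw of $\ACal_t$, so the standard sampling-with-replacement calculation gives $\BE_{\ACal_t}\|\g_t^{s+1}-g(\x_t^{s+1})\|^2\le\frac1a\cdot\frac1m\sum_{j=1}^m\|Y_j\|^2\le L_g^2\|\x_t^{s+1}-\tilde{\x}^s\|^2/a$, using $\|Y_j\|\le L_g\|\x_t^{s+1}-\tilde{\x}^s\|$; the identical argument with $\partial g_j$ and the $\ell_g$-Jacobian-Lipschitz bound gives $\BE_{\ACal_t}\|\z_t^{s+1}-\partial g(\x_t^{s+1})\|^2\le\ell_g^2\|\x_t^{s+1}-\tilde{\x}^s\|^2/a$. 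Substituting these into $\BE_{\ACal_t}[\frac1n\sum_{i=1}^n\|\delta_i\|^2]$ gives $2(L_f^2\ell_g^2+L_g^4\ell_f^2)\|\x_t^{s+1}-\tilde{\x}^s\|^2/a$, and the lemma follows from $L_f^2\ell_g^2+L_g^4\ell_f^2\le(L_f\ell_g+L_g^2\ell_f)^2=\ell^2$.

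The main obstacle is bookkeeping rather than any substantive inequality: one must check that $\BCal_t$ is drawn independently of $\ACal_t$ given the current iterate, so that the tower decomposition is legitimate and $\su_t^{s+1}$ carries no $\ACal_t$-dependence, and one must handle the matrix norm in the with-replacement variance estimate carefully --- most cleanly by reading $\|\cdot\|$ on Jacobians as the Frobenius norm, for which $\|\cdot\|^2$ obeys the bias--variance identity, and invoking $\|\cdot\|_{\mathrm{op}}\le\|\cdot\|_F$ only at the single point where the spectral bound $\|\partial g(\x_t^{s+1})\|\le L_g$ is used. Getting the split in the $\|\delta_i\|$ estimate exactly right is the one place where a careless argument loses a constant and misses the stated factor $2\ell^2$.
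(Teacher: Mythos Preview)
Your proposal is correct and follows essentially the same route as the paper: the same difference $\delta_i=[\z_t^{s+1}]^\top\nabla f_i(\g_t^{s+1})-[\partial g(\x_t^{s+1})]^\top\nabla f_i(g(\x_t^{s+1}))$, the same two-term split keeping the exact Jacobian in the second summand, and the same with-replacement variance bounds $\BE\|\g_t^{s+1}-g(\x_t^{s+1})\|^2\le L_g^2\|\x_t^{s+1}-\tilde{\x}^s\|^2/a$ and $\BE\|\z_t^{s+1}-\partial g(\x_t^{s+1})\|^2\le\ell_g^2\|\x_t^{s+1}-\tilde{\x}^s\|^2/a$, combined via $L_f^2\ell_g^2+L_g^4\ell_f^2\le\ell^2$. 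Your explicit tower decomposition $\BE_{\ACal_t}\BE_{\BCal_t}[\cdot]$ and your remark about reading the Jacobian variance in Frobenius norm are more careful than the paper, which simply writes the pointwise Cauchy--Schwarz bound and applies $\BE\|\xi-\BE\xi\|^2\le\BE\|\xi\|^2$ without comment, but the argument is the same.
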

\begin{proof}
Using~\eqref{Update:SCVRG-v} and Cauchy-Schwarz inequality, we have
\begin{equation}\label{inequality-variance-first}
\|\sv_t^{s+1} - \su_t^{s+1}\|^2 \leq \frac{1}{b} \sum_{i_t \in \BCal_t} \|\Delta_{i_t}^{s+1}\|^2. 
\end{equation}
where $\Delta_{i_t}^{s+1} = [\z_t^{s+1}]^\top\nabla f_{i_t}(\g_t^{s+1}) - [\partial g(\x_t^{s+1})]^\top \nabla f_{i_t}(g(\x_t^{s+1}))$. Using the Cauchy-Schwarz inequality again, we have
\begin{align*}
\|\Delta_{i_t}^{s+1}\|^2 & \leq  2\|\z_t^{s+1} - \partial g(\x_t^{s+1})\|^2 \|\nabla f_{i_t}(\g_t^{s+1})\|^2 \\
& + 2\|\partial g(\x_t^{s+1})\|^2\|\nabla f_{i_t}(\g_t^{s+1}) - \nabla f_{i_t}(g(\x_t^{s+1}))\|^2.
\end{align*}
By Assumption~\ref{Assumption:Smooth-Gradient-Jacobian-Main}, we have
{\small \begin{equation}\label{inequality-variance-second}
\|\Delta_{i_t}^{s+1}\|^2 \leq 2L_f^2\|\z_t^{s+1} - \partial g(\x_t^{s+1})\|^2 + 2L_g^2\ell_f^2\|\g_t^{s+1} - g(\x_t^{s+1})\|^2.
\end{equation}}
By the definition of $\z_t^{s+1}$, we have
\begin{eqnarray*}
& & \|\z_t^{s+1} - \partial g(\x_t^{s+1})\|^2 \\
& = & \frac{1}{a^2}\|\sum_{j_t \in \ACal_t} (\partial g_{j_t}(\x_t^{s+1}) - \partial g_{j_t}(\tilde{\x}^s) - \partial g(\x_t^{s+1}) + \tilde{\z}^{s+1})\|^2.
\end{eqnarray*}
Since $\ACal_t \subseteq [m]$ are sampled uniformly with replacement with $|\ACal_t|=a$ and $\BE\|\xi-\BE\xi\|^2 \leq \BE\|\xi\|^2$, we have
{\small \begin{align}\label{inequality-variance-third}
& \BE[\|\z_t^{s+1} - \partial g(\x_t^{s+1})\|^2 \mid \x_t^{s+1}, \tilde{\x}^s] \\
= & \frac{1}{a^2}\sum_{j_t \in \ACal_t} \BE[\|\partial g_{j_t}(\x_t^{s+1}) - \partial g_{j_t}(\tilde{\x}^s) - \partial g(\x_t^{s+1}) + \tilde{\z}^{s+1}\|^2 \mid \x_t^{s+1}, \tilde{\x}^s] \nonumber \\
\leq & \frac{1}{a^2}\sum_{j_t \in \ACal_t} \BE[\|\partial g_{j_t}(\x_t^{s+1}) - \partial g_{j_t}(\tilde{\x}^s)\|^2 \mid \x_t^{s+1}, \tilde{\x}^s] \leq \frac{\ell_g^2}{a}\left\|\x_t^{s+1} - \tilde{\x}^s\right\|^2. \nonumber
\end{align}}
where the last inequality holds due to Assumption~\ref{Assumption:Smooth-Gradient-Jacobian-Main}. By the same argument, we have
\begin{equation}\label{inequality-variance-fourth}
\BE[\|\g_t^{s+1} - g(\x_t^{s+1})\|^2 \mid \x_t^{s+1}, \tilde{\x}^s] \leq \frac{L_g^2}{a}\|\x_t^{s+1} - \tilde{\x}^s\|^2. 
\end{equation}
Combining~\eqref{inequality-variance-third} and~\eqref{inequality-variance-fourth} with~\eqref{inequality-variance-second} yields that 
\begin{equation*}
\BE[\|\Delta_{i_t}^{s+1}\|^2 \mid \x_t^{s+1}, \tilde{\x}^s] \leq \left(\frac{2L_f^2\ell_g^2 + 2L_g^4\ell_f^2}{a}\right)\|\x_t^{s+1} - \tilde{\x}^s\|^2. 
\end{equation*}
Plugging the above inequality into~\eqref{inequality-variance-first} together with the definition of $\ell$ implies the desired inequality. 
\end{proof}
\begin{lemma}\label{Lemma:Gradient-Objective}
Let $\x^* \in X$ be an optimal solution, we have
\begin{align*}
\lefteqn{\BE[\|\su_t^{s+1} - \nabla F(\x_t^{s+1})\|^2 \mid \x_t^{s+1}, \tilde{\x}^s]} \\
& \leq  \frac{16\ell(\Phi(\x_t^{s+1}) - \Phi(\x^*) + \Phi(\tilde{\x}^s) - \Phi(\x^*))}{b} \\
& + \ \frac{12\ell^2(\|\x_t^{s+1} - \x^*\|^2 + \|\tilde{\x}^s - \x^*\|^2)}{b}.  
\end{align*}
\end{lemma}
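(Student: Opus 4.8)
The plan is to carry out the usual variance-reduction bookkeeping, but to invoke convexity only through the \emph{aggregate} objective $F$, not through the individual composite summands. Write $\phi_i(\x):=[\partial g(\x)]^\top\nabla f_i(g(\x))$, so $\nabla F(\x)=\tfrac1n\sum_{i=1}^n\phi_i(\x)$; since $\tilde{\z}^{s+1}=\partial g(\tilde{\x}^s)$ and $\tilde{\g}^{s+1}=g(\tilde{\x}^s)$ we get $\tilde{\sv}^{s+1}=\nabla F(\tilde{\x}^s)$, so that $\su_t^{s+1}=\nabla F(\tilde{\x}^s)+\tfrac1b\sum_{i_t\in\BCal_t}(\phi_{i_t}(\x_t^{s+1})-\phi_{i_t}(\tilde{\x}^s))$, a conditionally unbiased estimate of $\nabla F(\x_t^{s+1})$. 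Exactly as in the proof of Lemma~\ref{Lemma:SampleGradient-Variance}, sampling $\BCal_t$ uniformly with replacement makes the conditional second moment of the (centered) error equal to $1/b$ times the per-draw variance, which is at most the per-draw second moment; inserting $\phi_i(\x^*)$ and using $\|u-v\|^2\le 2\|u\|^2+2\|v\|^2$ gives
\[
\BE[\|\su_t^{s+1}-\nabla F(\x_t^{s+1})\|^2\mid\x_t^{s+1},\tilde{\x}^s]\le\frac{2}{b}\,\BE_i[\|\phi_i(\x_t^{s+1})-\phi_i(\x^*)\|^2]+\frac{2}{b}\,\BE_i[\|\phi_i(\tilde{\x}^s)-\phi_i(\x^*)\|^2],
\]
where $\BE_i$ averages over a uniform index $i\in[n]$, so it remains to bound $\BE_i\|\phi_i(\y)-\phi_i(\x^*)\|^2$ for a generic $\y\in X$.

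To do that, I would write $\phi_i(\y)-\phi_i(\x^*)=\big(\phi_i(\y)-\phi_i(\x^*)-(\nabla F(\y)-\nabla F(\x^*))\big)+\big(\nabla F(\y)-\nabla F(\x^*)\big)$, the cross term vanishing in expectation because the first bracket has zero mean over $i$. Each $\phi_i$ is $\ell$-Lipschitz: by Assumption~\ref{Assumption:Smooth-Gradient-Jacobian-Main}, $\|\phi_i(\x)-\phi_i(\y)\|\le L_f\ell_g\|\x-\y\|+L_g^2\ell_f\|\x-\y\|=\ell\|\x-\y\|$, the same computation that yields the Lipschitz bound on $\nabla F$ noted after Assumption~\ref{Assumption:Smooth-Gradient-Jacobian-Main}; hence the first bracket has norm at most $2\ell\|\y-\x^*\|$ and contributes at most $4\ell^2\|\y-\x^*\|^2$. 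For the second bracket, $F$ is convex and $\ell$-smooth, so co-coercivity gives $\|\nabla F(\y)-\nabla F(\x^*)\|^2\le 2\ell(F(\y)-F(\x^*)-\langle\nabla F(\x^*),\y-\x^*\rangle)$; combining this with the first-order optimality of $\x^*$ for $\Phi=F+r$ over $X$ — which, through the subgradient inequality for $r$ in Assumption~\ref{Assumption:Objective-Convex-Main}, gives $-\langle\nabla F(\x^*),\y-\x^*\rangle\le r(\y)-r(\x^*)$ — produces $\|\nabla F(\y)-\nabla F(\x^*)\|^2\le 2\ell(\Phi(\y)-\Phi(\x^*))$. Adding the two contributions yields $\BE_i\|\phi_i(\y)-\phi_i(\x^*)\|^2\le 2\ell(\Phi(\y)-\Phi(\x^*))+4\ell^2\|\y-\x^*\|^2$, and substituting this at $\y=\x_t^{s+1}$ and $\y=\tilde{\x}^s$ in the display above and collecting constants (with slack, since $\Phi(\cdot)\ge\Phi(\x^*)$) gives the stated inequality.

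The main obstacle is the last estimate, $\BE_i\|\phi_i(\y)-\phi_i(\x^*)\|^2\le c_1\ell(\Phi(\y)-\Phi(\x^*))+c_2\ell^2\|\y-\x^*\|^2$ for absolute constants $c_1,c_2$. In classical convex SVRG one would bound the left-hand side by a \emph{per-summand} co-coercivity, which delivers a clean function-gap term with no squared-distance term; here the summands $f_i\circ g$ need not be convex — both $f_i$ and $g$ may be nonconvex and only the aggregate $F$ is convex — so co-coercivity is available only at the aggregate level. This is precisely what forces the mean/variance split of $\phi_i$ above and leaves a variance remainder that must be controlled by the Lipschitz constant $\ell$, which is why the final bound has to carry the extra $\ell^2\|\x-\x^*\|^2$ terms alongside the $\ell(\Phi-\Phi(\x^*))$ terms. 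A lesser point: the per-summand Lipschitzness of $\phi_i$ is used but not stated verbatim in the assumptions, though it follows from Assumption~\ref{Assumption:Smooth-Gradient-Jacobian-Main} by the computation indicated.
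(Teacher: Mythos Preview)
Your proposal is correct, and the overall architecture---variance bookkeeping for the mini-batch, insertion of $\phi_i(\x^*)$, and the optimality inequality $-\langle\nabla F(\x^*),\y-\x^*\rangle\le r(\y)-r(\x^*)$---matches the paper. The genuine difference is in how you bound $\BE_i\|\phi_i(\y)-\phi_i(\x^*)\|^2$. The paper does \emph{not} use only aggregate convexity there: it defines the per-index function $\varphi_i(\x)=f_i(g(\x))-\langle\phi_i(\x^*),\x-\x^*\rangle+\tfrac{\ell}{2}\|\x-\x^*\|^2$, observes that $\varphi_i$ is convex (since $f_i\circ g$ is $\ell$-smooth, hence $\ell$-weakly convex), $2\ell$-smooth, and minimized at $\x^*$, and applies co-coercivity \emph{per index} to $\varphi_i$. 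You instead perform a bias--variance split $\phi_i(\y)-\phi_i(\x^*)=\big(\phi_i(\y)-\phi_i(\x^*)-(\nabla F(\y)-\nabla F(\x^*))\big)+(\nabla F(\y)-\nabla F(\x^*))$, handle the mean part by co-coercivity of the convex $F$, and bound the zero-mean deviation pointwise by $2\ell\|\y-\x^*\|$ via the $\ell$-Lipschitzness of each $\phi_i$. Both routes rest on the same underlying fact (each $\phi_i=\nabla(f_i\circ g)$ is $\ell$-Lipschitz); your argument is a touch more elementary, avoids the auxiliary $\varphi_i$, and actually produces sharper constants ($4\ell$ and $8\ell^2$ versus $16\ell$ and $12\ell^2$) before you relax them to match the stated bound.
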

\begin{proof}
By the definition of $\su_t^{s+1}$ and using the Cauchy-Schwarz inequality, we have
\begin{equation*}
\|\su_t^{s+1} - \nabla F(\x_t^{s+1})\|^2 = \frac{1}{b^2}\|\sum_{i_t \in \BCal_t} \widehat{\Delta}_{i_t}^{s+1} - \BE[\widehat{\Delta}_{i_t}^{s+1} \mid \x_t^{s+1}, \tilde{\x}^s]\|^2. 
\end{equation*}
where $\widehat{\Delta}_{i_t}^{s+1}$ is defined by
\begin{equation*}
\widehat{\Delta}_{i_t}^{s+1} = [\partial g(\x_t^{s+1})]^\top\nabla f_{i_t}(g(\x_t^{s+1})) - [\tilde{\z}^{s+1}]^\top \nabla f_{i_t}(\tilde{\g}^{s+1}). 
\end{equation*}
Since $\BCal_t \subseteq [n]$ are sampled uniformly with replacement with $|\BCal_t|=b$ and $\BE\left\|\xi-\BE\left[\xi\right]\right\|^2 \leq \BE\left\|\xi\right\|^2$, we have
\begin{align}\label{inequality-objective-first}
\lefteqn{\BE[\|\su_t^{s+1} - \nabla F(\x_t^{s+1})\|^2 \mid \x_t^{s+1}, \tilde{\x}^s]} \\
& = \frac{1}{b^2}\sum_{i_t \in \BCal_t} \BE[\|\widehat{\Delta}_{i_t}^{s+1} - \BE[\widehat{\Delta}_{i_t}^{s+1} \mid \x_t^{s+1}, \tilde{\x}^s]\|^2 \mid \x_t^{s+1}, \tilde{\x}^s] \nonumber \\
& \leq \frac{1}{b^2}\sum_{i_t \in \BCal_t} \BE[\|\widehat{\Delta}_{i_t}^{s+1}\|^2 \mid \x_t^{s+1}, \tilde{\x}^s] \nonumber
\end{align}
Using the Cauchy-Schwarz inequality again, we have
\begin{align*}
\|\widehat{\Delta}_{i_t}^{s+1}\|^2 \leq & 2\|[\partial g(\x^*)]^\top\nabla f_{i_t}(g(\x^*)) - [\tilde{\z}^{s+1}]^\top \nabla f_{i_t}(\tilde{\g}^{s+1})\|^2 \\
& \hspace{-3em} + 2\|[\partial g(\x_t^{s+1})]^\top \nabla f_{i_t}(g(\x_t^{s+1})) - [\partial g(\x^*)]^\top \nabla f_{i_t}(g(\x^*))\|^2.
\end{align*}
We define $\varphi_i(\x) = f_i(g(\x)) - \langle [\partial g(\x^*)]^\top\nabla f_i(g(\x^*)), \x - \x^*\rangle + \frac{\ell}{2}\|\x - \x^*\|^2$ for any $\x \in X$. Since $\varphi_i$ is convex and $2\ell$-gradient Lipschitz with a minimizer $\x^*$, we obtain, by applying~\cite[Theorem~2.1.5]{Nesterov-2013-Introductory} for $\varphi_i$, 
\begin{align*}
\lefteqn{ \|[\partial g(\x)]^\top\nabla f_i(g(\x)) -[\partial g(\x^*)]^\top\nabla f_i(g(\x^*)) \|^2} \\
& \leq 2\left\|\nabla\varphi_i(\x)\right\|^2 + 2\ell^2\|\x - \x^*\|^2 \\
& \leq 8\ell(\varphi_i(\x)  - \varphi_i(\x^*)) + 2\ell^2\|\x - \x^*\|^2.
\end{align*}
Putting these pieces together yields that 
\begin{align*}
\lefteqn{ \BE[\|\widehat{\Delta}_{i_t}^{s+1}\|^2 \mid \x_t^{s+1}, \tilde{\x}^s]} \\
& \leq  16\ell(F(\x_t^{s+1}) - F(\x^*) - \langle \nabla F(\x^*), \x_t^{s+1} - \x^*\rangle) \\
& \ + 16\ell(F(\tilde{\x}^s) - F(\x^*) - \langle \nabla F(\x^*), \tilde{\x}^s - \x^*\rangle) \\ 
& \ + 12\ell^2(\|\x_t^{s+1} - \x^*\|^2 + \|\tilde{\x}^s - \x^*\|^2). 
\end{align*}
Since $\x^* \in X$ is an optimal solution of $F+r$ with $F$ and $r$ being both convex, we have
\begin{equation*}
r(\x) - r(\x^*) + \left\langle\nabla F(\x^*), \x-\x^*\right\rangle \geq 0. 
\end{equation*}
which implies that 
\begin{align*}
\BE[\|\widehat{\Delta}_{i_t}^{s+1}\|^2 \mid \x_t^{s+1}, \tilde{\x}^s] \leq & 12\ell^2(\|\x_t^{s+1} - \x^*\|^2 + \|\tilde{\x}^s - \x^*\|^2) \\
& \hspace{-3em} + 16\ell(\Phi(\x_t^{s+1}) - \Phi(\x^*) + \Phi(\tilde{\x}^s) - \Phi(\x^*)). 
\end{align*}
Combining this with~\eqref{inequality-objective-first} yields the desired inequality. 
\end{proof}
Combining Lemma~\ref{Lemma:SampleGradient-Variance} and~\ref{Lemma:Gradient-Objective} with Cauchy-Schwarz inequality implies
\begin{align*}
\lefteqn{\BE[\|\sv_t^{s+1} - \nabla F(\x_t^{s+1})\|^2 \mid \x_t^{s+1}, \tilde{\x}^s]} \\
& = \BE[\|\sv_t^{s+1} - \su_t^{s+1}\|^2 + \|\su_t^{s+1} - \nabla F(\x_t^{s+1})\|^2 \mid \x_t^{s+1}, \tilde{\x}^s] \\
& \leq \frac{16\ell}{b}(\Phi(\x_t^{s+1}) - \Phi(\x^*) + \Phi(\tilde{\x}^s) - \Phi(\x^*))  \\
& \ + \left(\frac{4\ell^2}{a} + \frac{12\ell^2}{b}\right)(\|\x_t^{s+1} - \x^*\|^2 + \|\tilde{\x}^s - \x^*\|^2). 
\end{align*}
\begin{lemma}\label{Lemma:Objective-Variational-Inequality}
Let $\x^*\in X$ be an optimal solution, we have
\begin{align*}
\lefteqn{\Phi(\x^*) - \BE[\Phi(\x_{t+1}^{s+1}) \mid \x_t^{s+1}, \tilde{\x}^s]} \\
& \geq  \frac{1}{2\eta_{t+1}^{s+1}}(\BE[\|\x^* - \x_{t+1}^{s+1}\|^2 \mid \x_t^{s+1}, \tilde{\x}^s]-\|\x^* - \x_t^{s+1}\|^2) \\
& \ -\frac{\beta\ell\|\x^* - \x_t^{s+1}\|^2}{2} - \frac{\BE[\|\sv_t^{s+1} - \su_t^{s+1}\| \mid \x_t^{s+1}, \tilde{\x}^s]}{2\beta\ell} \\
& \ -\frac{\eta\BE[\|\sv_t^{s+1} - \nabla F(\x_t^{s+1})\|^2 \mid \x_t^{s+1}, \tilde{\x}^s]}{2(1-\eta\ell)}, \quad \forall \beta \in (0, 1). 
\end{align*}
\end{lemma}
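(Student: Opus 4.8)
The plan is to carry out the standard one-step analysis of the proximal stochastic-gradient step~\eqref{Update:SCVRG-main}, taking care to isolate the gradient-error contributions. Throughout, abbreviate $\x=\x_t^{s+1}$, $\x^+=\x_{t+1}^{s+1}$, $\sv=\sv_t^{s+1}$, and $\su=\su_t^{s+1}$, keep $\eta_{t+1}^{s+1}$ as the current step size, and write $\BE[\cdot]$ for $\BE[\cdot\mid\x_t^{s+1},\tilde{\x}^s]$.

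First I would write the first-order optimality condition for the problem defining $\prox_r^{\eta_{t+1}^{s+1}}(\x-\eta_{t+1}^{s+1}\sv)$: there exist $\xi\in\partial r(\x^+)$ and a normal vector of $X$ at $\x^+$ whose pairing with $\x^*-\x^+$ is nonpositive, so that convexity of $r$ gives the three-point inequality
\[ r(\x^+)-r(\x^*)\;\le\;\frac{1}{2\eta_{t+1}^{s+1}}\big(\|\x-\x^*\|^2-\|\x^+-\x^*\|^2-\|\x^+-\x\|^2\big)-\langle\sv,\x^+-\x^*\rangle. \]
Next I would use the $\ell$-smoothness of $F$ (legitimate since $\x,\x^+\in X$, by the Lipschitz-gradient bound recorded after Assumption~\ref{Assumption:Smooth-Gradient-Jacobian-Main}) in the form $F(\x^+)\le F(\x)+\langle\nabla F(\x),\x^+-\x\rangle+\tfrac{\ell}{2}\|\x^+-\x\|^2$ together with the convexity of $F$ (Assumption~\ref{Assumption:Objective-Convex-Main}) in the form $F(\x)\le F(\x^*)+\langle\nabla F(\x),\x-\x^*\rangle$. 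Adding these to the three-point inequality and using $\Phi=F+r$, the first-order terms in $\x^+$ collapse and I obtain
\[ \Phi(\x^*)-\Phi(\x^+)\;\ge\;\frac{\|\x^+-\x^*\|^2-\|\x-\x^*\|^2}{2\eta_{t+1}^{s+1}}+\Big(\frac{1}{2\eta_{t+1}^{s+1}}-\frac{\ell}{2}\Big)\|\x^+-\x\|^2+\langle\sv-\nabla F(\x),\x^+-\x^*\rangle. \]

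The crux is the term $\langle\sv-\nabla F(\x),\x^+-\x^*\rangle$: because $\x^+$ is correlated with the sampling noise in $\sv$, one cannot simply take expectations and invoke unbiasedness of $\su$. I would split it as $\langle\sv-\su,\x-\x^*\rangle+\langle\su-\nabla F(\x),\x-\x^*\rangle+\langle\sv-\nabla F(\x),\x^+-\x\rangle$. The middle piece has zero conditional expectation since $\BE[\su]=\nabla F(\x)$ and $\x$ is measurable. For the first piece, Young's inequality with weight $\beta\ell$ yields the lower bound $-\tfrac{\beta\ell}{2}\|\x-\x^*\|^2-\tfrac{1}{2\beta\ell}\|\sv-\su\|^2$. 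For the last piece, Cauchy--Schwarz followed by Young's inequality with weight $(1-\eta_{t+1}^{s+1}\ell)/\eta_{t+1}^{s+1}$ gives $-\big(\tfrac{1}{2\eta_{t+1}^{s+1}}-\tfrac{\ell}{2}\big)\|\x^+-\x\|^2-\tfrac{\eta_{t+1}^{s+1}}{2(1-\eta_{t+1}^{s+1}\ell)}\|\sv-\nabla F(\x)\|^2$, the first of which exactly cancels the positive $\|\x^+-\x\|^2$ term already present. Here $1-\eta_{t+1}^{s+1}\ell\ge 1-\eta\ell>0$ is ensured by the choice of $\eta$ in Theorem~\ref{Theorem:SCVRG-Complexity-Main}, and since $t\mapsto t/(1-t\ell)$ is increasing on $[0,1/\ell)$ while $\eta_{t+1}^{s+1}\le\eta$ by construction, I may bound $\tfrac{\eta_{t+1}^{s+1}}{1-\eta_{t+1}^{s+1}\ell}\le\tfrac{\eta}{1-\eta\ell}$, which weakens the bound in the desired direction. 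Finally I would take $\BE[\cdot]$ of both sides: $\|\x-\x^*\|^2$ is deterministic under the conditioning, the zero-mean term drops, and rearranging yields precisely the claimed inequality (with $\|\sv_t^{s+1}-\su_t^{s+1}\|$ understood as the squared norm $\|\sv_t^{s+1}-\su_t^{s+1}\|^2$).

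The main obstacle is exactly this decomposition of $\langle\sv-\nabla F(\x),\x^+-\x^*\rangle$: it must be arranged so that unbiasedness is used only against the measurable displacement $\x-\x^*$, while the unavoidable $\|\x^+-\x\|^2$ term generated by Young's inequality is absorbed by the positive $\|\x^+-\x\|^2$ term produced jointly by the prox step and smoothness --- this is what pins down the coefficient $\eta/(2(1-\eta\ell))$ and forces the requirement $\eta\ell<1$. The remaining manipulations are routine three-point-identity and completion-of-squares bookkeeping.
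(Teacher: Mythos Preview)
Your proposal is correct and follows essentially the same approach as the paper. Both arguments combine the prox three-point inequality, $\ell$-smoothness and convexity of $F$, the unbiasedness of $\su_t^{s+1}$ against the measurable displacement $\x_t^{s+1}-\x^*$, Young's inequality with weight $\beta\ell$ on $\langle\sv_t^{s+1}-\su_t^{s+1},\x_t^{s+1}-\x^*\rangle$, Young's inequality with weight $(1-\eta_{t+1}^{s+1}\ell)/\eta_{t+1}^{s+1}$ on $\langle\sv_t^{s+1}-\nabla F(\x_t^{s+1}),\x_{t+1}^{s+1}-\x_t^{s+1}\rangle$ so as to exactly cancel the $\|\x_{t+1}^{s+1}-\x_t^{s+1}\|^2$ term, and the monotonicity bound $\eta_{t+1}^{s+1}/(1-\eta_{t+1}^{s+1}\ell)\le\eta/(1-\eta\ell)$; the only difference is that you invoke convexity of $F$ before assembling the master inequality while the paper invokes it afterwards, which is purely cosmetic. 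Your reading of $\|\sv_t^{s+1}-\su_t^{s+1}\|$ as $\|\sv_t^{s+1}-\su_t^{s+1}\|^2$ is also consistent with the paper's own proof and with how the lemma is used downstream.
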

\begin{proof}
By the update of $\x_{t+1}^{s+1}$ in~\eqref{Update:SCVRG-main}, we have
{\small \begin{equation*}
r(\x^*) - r(\x_{t+1}^{s+1}) + \langle\x^* - \x_{t+1}^{s+1}, \sv_t^{s+1}\rangle + \frac{\langle \x^* - \x_{t+1}^{s+1}, \x_{t+1}^{s+1} - \x_t^{s+1}\rangle}{\eta_{t+1}^{s+1}} \geq 0. 
\end{equation*}}
Furthermore, we have
\begin{align*}
\lefteqn{\langle\x^* - \x_{t+1}^{s+1}, \sv_t^{s+1}\rangle} \\
& = \langle\x^* - \x_t^{s+1}, \sv_t^{s+1}\rangle + \langle\x_t^{s+1} - \x_{t+1}^{s+1}, \nabla F(\x_t^{s+1})\rangle \\ 
& \ + \langle\x_t^{s+1} - \x_{t+1}^{s+1}, \sv_t^{s+1} - \nabla F(\x_t^{s+1})\rangle \\
& \leq  \langle\x^* - \x_t^{s+1}, \sv_t^{s+1}\rangle + F(\x_t^{s+1}) - F(\x_{t+1}^{s+1}) + \frac{\ell}{2}\|\x_t^{s+1} - \x_{t+1}^{s+1}\|^2 \\
& \ + \frac{(1-\eta_{t+1}^{s+1}\ell)\|\x_t^{s+1} - \x_{t+1}^{s+1}\|^2}{2\eta_{t+1}^{s+1}} + \frac{\eta_{t+1}^{s+1}\|\sv_t^{s+1} - \nabla F(\x_t^{s+1})\|^2}{2(1-\eta_{t+1}^{s+1}\ell)}, 
\end{align*}
where the inequality comes from Assumption~\ref{Assumption:Smooth-Gradient-Jacobian-Main} and Young's inequality. In addition, we have 
\begin{align*}
\lefteqn{\langle \x^* - \x_{t+1}^{s+1}, \x_{t+1}^{s+1} - \x_t^{s+1}\rangle} \\
& = \frac{1}{2}(\|\x^*-\x_t^{s+1}\|^2 - \|\x^* - \x_{t+1}^{s+1}\|^2)-\frac{1}{2}\|\x_{t+1}^{s+1} - \x_t^{s+1}\|^2.   
\end{align*}
Putting these pieces together yields 
{\small \begin{align}\label{inequality-variational-first}
\lefteqn{r(\x^*) + F(\x_t^{s+1}) - \Phi(\x_{t+1}^{s+1}) + \langle\x^* - \x_t^{s+1}, \sv_t^{s+1}\rangle} \\
& \geq \frac{\|\x^*-\x_{t+1}^{s+1}\|^2 - \|\x^* - \x_t^{s+1}\|^2}{2\eta_{t+1}^{s+1}} - \frac{\eta_{t+1}^{s+1}\|\sv_t^{s+1} - \nabla F(\x_t^{s+1})\|^2}{2(1-\eta_{t+1}^{s+1}\ell)}. \nonumber
\end{align}}
Since $\su_t^{s+1}$ is unbiased estimate of $\nabla F(\x_t^{s+1})$, we have
{\small \begin{align*}
\lefteqn{\BE[\langle\x^* - \x_t^{s+1}, \sv_t^{s+1} \rangle \mid \x_t^{s+1}, \tilde{\x}^s]} \\
& \leq \langle\x^* - \x_t^{s+1}, \nabla F(\x_t^{s+1})\rangle + \BE[\langle\x^* - \x_t^{s+1}, \sv_t^{s+1} - \su_t^{s+1}\rangle \mid \x_t^{s+1}, \tilde{\x}^s]. 
\end{align*}}
Using the convexity of $F$ and Young's inequality, we have
{\small \begin{align}\label{inequality-variational-second}
\lefteqn{\BE[\langle\x^* - \x_t^{s+1}, \sv_t^{s+1} \rangle \mid \x_t^{s+1}, \tilde{\x}^s], \quad \forall \beta > 0} \\ 
& \leq F(\x^*) - F(\x_t^{s+1}) + \frac{\beta\ell\|\x^*-\x_t^{s+1}\|^2}{2} + \frac{\BE[\|\sv_t^{s+1} - \su_t^{s+1}\| \mid \x_t^{s+1}, \tilde{\x}^s]}{2\beta\ell}. \nonumber
\end{align}}
Since $\eta_{t+1}^{s+1} \leq \eta$, we have $\frac{\eta_{t+1}^{s+1}}{2(1-\eta_{t+1}^{s+1}\ell)} \leq \frac{\eta}{2(1-\eta\ell)}$. 
Taking the conditional expectation of~\eqref{inequality-variational-first} together with~\eqref{inequality-variational-second} and the above inequality yields the desired inequality. 
\end{proof}
\begin{lemma}\label{Lemma:Objective-Gap}
Let $\beta \in (0, 1)$, $k_0\geq 1$ and the sample sizes satisfy that $a \geq 2\ell^2/\beta^2$ and $b \geq \ell^2/\beta^2$. The parameter $\eta > 0$ satisfy that $\eta \leq \min\{1/30\beta T \ell, 1/25\ell\}$. Then we have
\begin{align*}
\BE[\Phi(\tilde{\x}^{s+1}) - \Phi(\x^*)] \leq & \frac{\Phi(\x^0)-\Phi(\x^*)}{2^{S-1}} + \frac{5\beta\ell\|\x^*-\x^0\|^2}{2^S} \\
& + \frac{\|\x^*-\x^0\|^2}{2^S\eta k_0}. 
\end{align*}
\end{lemma}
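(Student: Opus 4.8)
\medskip\noindent
I will prove the bound for the iterate $\tilde{\x}^S$ returned by Algorithm~\ref{Algorithm:SCVRG}, which is the relevant instance of the statement. The plan is to turn the one-step estimate of Lemma~\ref{Lemma:Objective-Variational-Inequality} into a per-iteration recursion, sum it over one epoch, and unroll the resulting epoch-to-epoch recursion using the doubling schedule $k_{s+1}=2k_s$. Substituting the three variance bounds --- Lemma~\ref{Lemma:SampleGradient-Variance}, Lemma~\ref{Lemma:Gradient-Objective}, and the displayed inequality obtained by combining them --- into Lemma~\ref{Lemma:Objective-Variational-Inequality}, and splitting $\|\x_t^{s+1}-\tilde{\x}^s\|^2\le 2\|\x_t^{s+1}-\x^*\|^2+2\|\tilde{\x}^s-\x^*\|^2$, leaves every right-hand term as one of $\Phi(\x_t^{s+1})-\Phi(\x^*)$, $\Phi(\tilde{\x}^s)-\Phi(\x^*)$, $\|\x_t^{s+1}-\x^*\|^2$, $\|\tilde{\x}^s-\x^*\|^2$, together with the telescoping difference $\tfrac{1}{2\eta_{t+1}^{s+1}}(\|\x^*-\x_t^{s+1}\|^2-\BE[\|\x^*-\x_{t+1}^{s+1}\|^2])$. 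The conditions $a\ge 2\ell^2/\beta^2$, $b\ge \ell^2/\beta^2$ and $\eta\le 1/(25\ell)$ are calibrated so that the variance-generated coefficient of $\|\x_t^{s+1}-\x^*\|^2$ is at most a small multiple of $\beta\ell$ and that of $\Phi(\x_t^{s+1})-\Phi(\x^*)$ is $O(\eta\ell/b)$; the latter I carry to the left-hand side, dropping the boundary term $\Phi(\x_0^{s+1})-\Phi(\x^*)\ge 0$ when I later sum. The decisive use of the adaptive step size is that $\tfrac{1}{2\eta_{t+1}^{s+1}}=\tfrac{\sqrt{2T-l}}{2\eta\sqrt{T}}$ is nonincreasing in the global counter $l$ with $\tfrac{1}{2\eta_t^{s+1}}-\tfrac{1}{2\eta_{t+1}^{s+1}}\ge \tfrac{1}{4\sqrt{2}\,\eta T}$, so by $\eta\le 1/(30\beta T\ell)$ this per-step drop absorbs the residual $\beta\ell$-coefficient; rearranging then gives, for every inner step,
\begin{equation*}
\BE[\Phi(\x_{t+1}^{s+1})-\Phi(\x^*)]+\frac{\BE[\|\x^*-\x_{t+1}^{s+1}\|^2]}{2\eta_{t+1}^{s+1}}\ \le\ \frac{\|\x^*-\x_t^{s+1}\|^2}{2\eta_t^{s+1}}+c_1\beta\ell\,\|\tilde{\x}^s-\x^*\|^2+\frac{c_2\eta\ell}{b}\bigl(\Phi(\tilde{\x}^s)-\Phi(\x^*)\bigr)
\end{equation*}
for absolute constants $c_1,c_2$ (conditional expectations understood).

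Next I would sum this over $t=0,\dots,k_{s+1}-1$ within epoch $s+1$. Because the global counter $l$ runs continuously across epochs we have $\eta_0^{s+1}=\eta_{k_s}^s$ and $\x_0^{s+1}=\x_{k_s}^s$, so the $\tfrac{1}{2\eta_\bullet}\|\x^*-\x_\bullet\|^2$ terms telescope; convexity of $\Phi$ with $\tilde{\x}^{s+1}=\tfrac{1}{k_{s+1}}\sum_t\x_t^{s+1}$ lower-bounds the left-hand sum by $k_{s+1}\BE[\Phi(\tilde{\x}^{s+1})-\Phi(\x^*)]$, and convexity of $\|\cdot-\x^*\|^2$ lets me replace $\|\tilde{\x}^s-\x^*\|^2$ by $\tfrac{1}{k_s}\sum_t\|\x_t^s-\x^*\|^2$, bounded in turn by the partial telescoped sums of epoch $s$; using $\eta_{k_{s+1}}^{s+1}\le\eta$, $\eta_{k_s}^s\ge\eta/\sqrt{2}$, $k_{s+1}=2k_s$ and $\eta\le 1/(30\beta T\ell)$ these stay $O(\|\x^0-\x^*\|^2)$ uniformly in $s$. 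Dividing by $k_{s+1}$ produces an epoch recursion $\Psi_{s+1}\le \tfrac{1}{2}\Psi_s + c_3\,2^{-S}\beta\ell\|\x^*-\x^0\|^2$ for the potential $\Psi_s:=\BE[\Phi(\tilde{\x}^s)-\Phi(\x^*)]+\tfrac{\alpha}{\eta k_s}\BE[\|\tilde{\x}^s-\x^*\|^2]$, with absolute constants $\alpha,c_3$: the contraction factor $\tfrac12$ on the distance part comes from $k_{s+1}=2k_s$, and on the function-value part from the $O(\eta\ell/b)$ coefficient together with the choice of $b$.

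Finally I would iterate this recursion over the $S$ epochs; since $k_{s+1}=2k_s$, the inhomogeneous terms sum geometrically, giving $\Psi_S\le 2^{-S}\Psi_0+c_4\,2^{-S}\beta\ell\|\x^*-\x^0\|^2$ for an absolute constant $c_4$. Because $\Psi_0=\Phi(\x^0)-\Phi(\x^*)+\tfrac{\alpha}{\eta k_0}\|\x^0-\x^*\|^2$ and $\Psi_S\ge \BE[\Phi(\tilde{\x}^S)-\Phi(\x^*)]$, choosing $\alpha$ small enough and collecting constants reproduces the three terms $\tfrac{\Phi(\x^0)-\Phi(\x^*)}{2^{S-1}}$, $\tfrac{5\beta\ell\|\x^*-\x^0\|^2}{2^S}$ and $\tfrac{\|\x^*-\x^0\|^2}{2^S\eta k_0}$ of the claim. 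The main obstacle is not any single estimate but the joint bookkeeping in the two middle steps: one must simultaneously (i) absorb the variance-induced $\|\x_t^{s+1}-\x^*\|^2$ coefficient into the per-step decrease of $\tfrac{1}{2\eta_{t+1}^{s+1}}$ --- the step where $\eta\le 1/(30\beta T\ell)$ is used and must be tracked with explicit constants --- and (ii) reconcile the last iterate $\x_{k_s}^s$ thrown up by telescoping with the averaged reference point $\tilde{\x}^s$ at which the variance bounds are stated, using convexity to pass between them while keeping every leftover term uniformly $O(\|\x^0-\x^*\|^2)$ so that it sums geometrically across epochs. The remaining step-size algebra and the selection of $c_1,c_2,c_3,\alpha$ are routine.
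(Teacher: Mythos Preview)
Your skeleton is the paper's: plug Lemmas~\ref{Lemma:SampleGradient-Variance}--\ref{Lemma:Gradient-Objective} into Lemma~\ref{Lemma:Objective-Variational-Inequality}, use the step-size increment $\tfrac{1}{\eta_t^{s+1}}-\tfrac{1}{\eta_{t+1}^{s+1}}\ge\tfrac{1}{2\sqrt{2}\,\eta T}\ge 10\beta\ell$ to absorb the $\beta\ell\|\x_t^{s+1}-\x^*\|^2$ coefficient, sum over the epoch, apply Jensen, and contract a potential. The gap is in how you build that potential. Two linked problems:

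\emph{First}, your per-step display fully absorbs the $\beta\ell\|\x_t^{s+1}-\x^*\|^2$ term into the step-size increment, leaving nothing on the left that, after averaging and Jensen, can supply an $\|\tilde{\x}^{s+1}-\x^*\|^2$ contribution to $\Psi_{s+1}$. \emph{Second}, your $\Psi_s$ carries only $\|\tilde{\x}^s-\x^*\|^2$, but the telescoped distance term spits out $\|\x_0^{s+1}-\x^*\|^2=\|\x_{k_s}^s-\x^*\|^2$, and there is no inequality tying the last iterate to the average. Your proposed fix --- upper-bounding $\|\tilde{\x}^s-\x^*\|^2$ by $\tfrac{1}{k_s}\sum_t\|\x_t^s-\x^*\|^2$ and asserting this is $O(\|\x^0-\x^*\|^2)$ uniformly in $s$ via ``partial telescoped sums'' --- is not established: partial telescoping only controls $\tfrac{1}{2\eta_t}\|\x_t^s-\x^*\|^2$ in terms of $\tfrac{1}{2\eta_0^s}\|\x_0^s-\x^*\|^2$ plus accumulated $\|\tilde{\x}^{s-1}-\x^*\|^2$ terms, and iterating that backward can grow geometrically rather than stay bounded.

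The paper resolves both issues simultaneously. It absorbs only \emph{part} of the coefficient, keeping a $+3\beta\ell\|\x_t^{s+1}-\x^*\|^2$ term on the left of the summed inequality so that Jensen produces $\tfrac{9\beta\ell}{2}\|\tilde{\x}^{s+1}-\x^*\|^2$; and it puts \emph{both} the averaged and last-iterate quantities into the potential,
\[
E_s=\BE[\Phi(\tilde{\x}^s)-\Phi(\x^*)]+\tfrac{9\beta\ell}{2}\BE\|\tilde{\x}^s-\x^*\|^2+\tfrac{3}{4\eta_0^{s+1}k_s}\BE\|\x_0^{s+1}-\x^*\|^2+\tfrac{3}{2k_s}\BE[\Phi(\x_0^{s+1})-\Phi(\x^*)].
\]
With this $E_s$, the summed-and-rearranged epoch inequality (moving the $\tfrac13\sum_t(\Phi(\x_t^{s+1})-\Phi(\x^*))$ term to the left, using $k_{s+1}=2k_s$, $\eta_{k_{s+1}}^{s+1}=\eta_0^{s+2}$, $\x_{k_{s+1}}^{s+1}=\x_0^{s+2}$) gives $E_{s+1}\le E_s/2$ directly, with no inhomogeneous remainder and no uniform-boundedness claim needed. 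Telescoping then yields the stated bound.
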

\begin{proof}
Combining Lemma~\ref{Lemma:Objective-Variational-Inequality} with Lemma~\ref{Lemma:SampleGradient-Variance} and~\ref{Lemma:Gradient-Objective} yields that 
{\small \begin{align*}
& \Phi(\x^*) - \BE[\Phi(\x_{t+1}^{s+1}) \mid \x_t^{s+1}, \tilde{\x}^s] \\
\geq & \frac{1}{2\eta_{t+1}^{s+1}}(\BE[\|\x^* - \x_{t+1}^{s+1}\|^2 \mid \x_t^{s+1}, \tilde{\x}^s]-\|\x^* - \x_t^{s+1}\|^2) -\frac{\beta\ell\|\x^* - \x_t^{s+1}\|^2}{2} \\
& - \left(\frac{2\ell}{a\beta} + \frac{2\eta\ell^2}{a(1-\eta\ell)} +  \frac{6\eta\ell^2}{b(1-\eta\ell)}\right)(\|\x_t^{s+1} - \x^*\|^2 + \|\tilde{\x}^s - \x^*\|^2) \\
& -\frac{8\eta\ell}{b(1-\eta\ell)}(\Phi(\x_t^{s+1}) - \Phi(\x^*) + \Phi(\tilde{\x}^s) - \Phi(\x^*)), \quad \forall \beta \in (0, 1).
\end{align*}}
Since $a \geq 2/\beta^2$, $b \geq 1/\beta^2$ and $0 < \eta \leq 1/25\ell$, we have
\begin{equation*}
\frac{2\ell}{a\beta} \leq \beta\ell, \quad \frac{8\eta\ell}{b(1-\eta\ell)} \leq \frac{1}{3}, \quad \frac{2\eta\ell^2}{a(1-\eta\ell)} +  \frac{6\eta\ell^2}{b(1-\eta\ell)} \leq \frac{\beta^2\ell}{2}. 
\end{equation*}
Putting these pieces together with $\beta \in (0, 1)$ yields that 
{\footnotesize \begin{align}\label{inequality-gap-first}
& \Phi(\x^*) - \BE[\Phi(\x_{t+1}^{s+1}) \mid \x_t^{s+1}, \tilde{\x}^s] \\
\geq & \frac{1}{2\eta_{t+1}^{s+1}}(\BE[\|\x^* - \x_{t+1}^{s+1}\|^2 \mid \x_t^{s+1}, \tilde{\x}^s]-\|\x^* - \x_t^{s+1}\|^2) \nonumber \\
- & \frac{\beta\ell(3\|\tilde{\x}^s - \x^*\|^2 + 4\|\x_t^{s+1} - \x^*\|^2)}{2}-\frac{\Phi(\x_t^{s+1}) - \Phi(\x^*) + \Phi(\tilde{\x}^s) - \Phi(\x^*)}{3} \nonumber \\
\geq & \frac{1}{2\eta_{t+1}^{s+1}}(\BE[\|\x^* - \x_{t+1}^{s+1}\|^2 \mid \x_t^{s+1}, \tilde{\x}^s]-\|\x^* - \x_t^{s+1}\|^2) -5\beta\ell\|\x_t^{s+1} - \x^*\|^2 \nonumber \\
- & \frac{\beta\ell(3\|\tilde{\x}^s - \x^*\|^2-6\|\x_t^{s+1} - \x^*\|^2)}{2}-\frac{\Phi(\x_t^{s+1}) - \Phi(\x^*) + \Phi(\tilde{\x}^s) - \Phi(\x^*)}{3}. \nonumber 
\end{align}}
By the definition of $\eta_t^{s+1}$, we have
\begin{equation}\label{inequality-gap-second}
\frac{1}{\eta_t^{s+1}} - \frac{1}{\eta_{t+1}^{s+1}} \geq \frac{1}{2\eta\sqrt{T}\sqrt{2T}} = \frac{1}{2\sqrt{2}\eta T} \geq 10\beta\ell.  
\end{equation}
Plugging~\eqref{inequality-gap-second} into~\eqref{inequality-gap-first} yields that 
{\small \begin{align}\label{inequality-gap-third}
& \Phi(\x^*) - \BE[\Phi(\x_{t+1}^{s+1}) \mid \x_t^{s+1}, \tilde{\x}^s] \\
\geq & \frac{\BE[\|\x^* - \x_{t+1}^{s+1}\|^2 \mid \x_t^{s+1}, \tilde{\x}^s]}{2\eta_{t+1}^{s+1}}-\frac{\|\x^* - \x_t^{s+1}\|^2}{2\eta_t^{s+1}} \nonumber \\
- & \frac{\beta\ell(3\|\tilde{\x}^s - \x^*\|^2-6\|\x_t^{s+1} - \x^*\|^2)}{2} - \frac{\Phi(\x_t^{s+1}) - \Phi(\x^*) + \Phi(\tilde{\x}^s) - \Phi(\x^*)}{3}. \nonumber 
\end{align}}
Taking the expectation of~\eqref{inequality-gap-third}, summing it over $0 \leq t \leq k_{s+1}-1$ and dividing the final inequality by $k_{s+1}$ yields that
{\small \begin{align*}
& \BE\left(\sum_{t=0}^{k_{s+1}-1} \frac{\Phi(\x_{t+1}^{s+1}) + 3\beta\ell\|\x_t^{s+1} - \x^*\|^2}{k_{s+1}} - \Phi(\x^*)\right) \\
\leq & \frac{1}{3}\BE\left(\sum_{t=0}^{k_{s+1}-1} \frac{\Phi(\x_t^{s+1})}{k_{s+1}} - \Phi(\x^*) + \Phi(\tilde{\x}^s) - \Phi(\x^*)\right) + \frac{3\beta\ell}{2}\BE[\|\tilde{\x}^s-\x^*\|^2] \\
& + \frac{1}{2\eta_0^{s+1} k_{s+1}}\BE[\|\x^*-\x_0^{s+1}\|^2] - \frac{1}{2\eta_{k_{s+1}}^{s+1}k_{s+1}}\BE[\|\x^*-\x_{k_{s+1}}^{s+1}\|^2]. 
\end{align*}}
Rearranging the above inequality yields that
{\small \begin{align*}
\lefteqn{2\BE\left(\sum_{t=0}^{k_{s+1}-1} \frac{\Phi(\x_{t+1}^{s+1}) + \frac{9\beta\ell}{2}\|\x_t^{s+1} - \x^*\|^2}{k_{s+1}} - \Phi(\x^*)\right)} \\
& \leq \BE\left(\frac{3\Phi(\x_0^{s+1}) - 3\Phi(\x_{k_{s+1}}^{s+1})}{k_{s+1}} + \Phi(\tilde{\x}^s) - \Phi(\x^*)\right) + \frac{9\beta\ell}{2}\BE[\|\tilde{\x}^s-\x^*\|^2] \\
& \ + \frac{3}{2\eta_0^{s+1} k_{s+1}}\BE[\|\x^*-\x_0^{s+1}\|^2] - \frac{3}{2\eta_{k_{s+1}}^{s+1}k_{s+1}}\BE[\|\x^*-\x_{k_{s+1}}^{s+1}\|^2]. 
\end{align*}}
By the convexity of $\Phi$ and the definition of $\tilde{\x}^{s+1}$, we have
\begin{equation*}
\Phi(\tilde{\x}^{s+1}) \leq \sum_{t=0}^{k_{s+1}-1} \frac{\Phi(\x_t^{s+1})}{k_{s+1}}, \ \|\tilde{\x}^{s+1}-\x^*\|^2 \leq \sum_{t=0}^{k_{s+1}-1}\frac{\|\x_t^{s+1} - \x^*\|^2}{k_{s+1}}. 
\end{equation*}
Putting these pieces together with $\x_{k_{s+1}}^{s+1} = \x_0^{s+2}$, $\eta_{k_{s+1}}^{s+1}=\eta_{0}^{s+2}$ and $k_{s+2}=2 k_{s+1}$ yields that $E_{s+1} \leq E_s/2$ where 
\begin{align*}
E_s = & \BE[\Phi(\tilde{\x}^s) - \Phi(\x^*)] + \frac{9\beta\ell\BE[\|\x^* - \tilde{\x}^s\|^2]}{2} \\ 
& + \frac{3\BE[\|\x^* - \x_0^{s+1}\|^2]}{4\eta_{0}^{s+1} k_s} + \frac{3\BE[\Phi(\x_{0}^{s+1})-\Phi(\x^*)]}{2 k_s}. 
\end{align*}
Telescoping the inequality over $0 \leq s \leq S$ and rearranging yields the desired inequality. 
\end{proof}
\textbf{Proof of Theorem~\ref{Theorem:SCVRG-Complexity-Main}:} Let $\beta = \epsilon/90\ell D_\x^2 \in (0, 1)$, we have $k_0 = 5$ and $a \geq 1620\ell^2 D_\x^4/\epsilon^2$ and $b \geq 810\ell^2 D_\x^4/\epsilon^2$. Note that $\eta>0$ satisfies $\eta \leq 1/25\ell$ and 
\begin{equation*}
\frac{1}{30\beta T \ell} \geq \frac{3D_\x^2}{\epsilon k_0 2^S} \geq \frac{D_\x^2}{10D_\Phi + 25\ell D_\x^2} = \eta. 
\end{equation*}
Therefore, Lemma~\ref{Lemma:Objective-Gap} holds true. By the definition of $\eta$, $S$, $D_\Phi$ and $D_\x$ and the fact that $\beta \in (0, 1)$, we have
\begin{equation*}
\max\left\{\frac{\Phi(\x^0)-\Phi(\x^*)}{2^{S-1}}, \frac{3\beta\ell\|\x^*-\x^0\|^2}{2^S}, \frac{\|\x^*-\x^0\|^2}{2^S\eta k_0}\right\} \leq \frac{\epsilon}{3}.   
\end{equation*}
Therefore, we conclude that the sample complexity is 
\begin{equation*}
S(m+n) + 2^S k_0(a + b) = O\left((m+n)\log\left(\frac{1}{\epsilon}\right) + \frac{1}{\epsilon^3}\right). 
\end{equation*}
This completes the proof. 

 


\bibliographystyle{plain}
\bibliography{IEEEabrv,ref}

\begin{thebibliography}{10}

\bibitem{Allen-2016-Improved}
Z.~Allen-Zhu and Y.~Yuan.
\newblock Improved {SVRG} for non-strongly-convex or sum-of-non-convex
  objectives.
\newblock In {\em ICML}, pages 1080--1089, 2016.

\bibitem{Bertsekas-1995-Neuro}
D.~P. Bertsekas and J.~N. Tsitsiklis.
\newblock Neuro-dynamic programming: an overview.
\newblock In {\em CDC}, volume~1, pages 560--564. IEEE, 1995.

\bibitem{Huo-2018-Accelerated}
Z.~Huo, B.~Gu, J.~Liu, and H.~Huang.
\newblock Accelerated method for stochastic composition optimization with
  nonsmooth regularization.
\newblock In {\em AAAI}, 2018.

\bibitem{Lian-2017-Finite}
X.~Lian, M.~Wang, and J.~Liu.
\newblock Finite-sum composition optimization via variance reduced gradient
  descent.
\newblock In {\em AISTATS}, pages 1159--1167, 2017.

\bibitem{Murty-1987-Some}
K.~G. Murty and S.~N. Kabadi.
\newblock Some {NP}-complete problems in quadratic and nonlinear programming.
\newblock {\em Mathematical Programming}, 39(2):117--129, 1987.

\bibitem{Nesterov-2013-Introductory}
Y.~Nesterov.
\newblock {\em Introductory Lectures on Convex Optimization: A Basic Course},
  volume~87.
\newblock Springer Science \& Business Media, 2013.

\bibitem{Parikh-2014-Proximal}
N.~Parikh and S.~Boyd.
\newblock Proximal algorithms.
\newblock {\em Foundations and Trends{\textregistered} in Optimization},
  1(3):127--239, 2014.

\bibitem{Ravikumar-2009-Sparse}
P.~Ravikumar, J.~Lafferty, H.~Liu, and L.~Wasserman.
\newblock Sparse additive models.
\newblock {\em Journal of the Royal Statistical Society. Series B, Statistical
  Methodology}, pages 1009--1030, 2009.

\bibitem{Shalev-2016-Sdca}
S.~Shalev-Shwartz.
\newblock {SDCA} without duality, regularization, and individual convexity.
\newblock In {\em ICML}, pages 747--754, 2016.

\bibitem{Shapiro-2009-Lectures}
A.~Shapiro, D.~Dentcheva, and A.~Ruszczy{\'n}ski.
\newblock {\em Lectures on Stochastic Programming: Modeling and Theory}.
\newblock SIAM, 2009.

\bibitem{Sutton-1998-Reinforcement}
R.~S. Sutton and A.~G. Barto.
\newblock {\em Reinforcement Learning: An Introduction}, volume~1.
\newblock MIT press Cambridge, 1998.

\bibitem{Wainwright-2019-High}
M.~J. Wainwright.
\newblock {\em High-dimensional Statistics: A Non-asymptotic Viewpoint},
  volume~48.
\newblock Cambridge University Press, 2019.

\bibitem{Wang-2017-Stochastic}
M.~Wang, E.~X. Fang, and H.~Liu.
\newblock Stochastic compositional gradient descent: algorithms for minimizing
  compositions of expected-value functions.
\newblock {\em Mathematical Programming}, 161(1-2):419--449, 2017.

\bibitem{Wang-2017-Accelerating}
M.~Wang, J.~Liu, and E.~X. Fang.
\newblock Accelerating stochastic composition optimization.
\newblock {\em Journal of Machine Learning Research}, 18:1--23, 2017.

\bibitem{Yu-2017-Fast}
Y.~Yu and L.~Huang.
\newblock Fast stochastic variance reduced admm for stochastic composition
  optimization.
\newblock In {\em IJCAI}, pages 3364--3370. AAAI Press, 2017.

\end{thebibliography}

\end{document}